\date{\today}
\newtheorem{theorem}{Theorem}
\newtheorem{lemma}{Lemma}
\newtheorem{prop}{Proposition}
\begin{document}
\title[Statistical Properties of Interval maps]{Statistical Properties of Interval maps with \\ critical points and discontinuities}

\author[H.\ Cui]{Hongfei  Cui}

\address{Wuhan Institute of Physics and Mathematics, The Chinese Academy of Sciences, P.O. Box 71010, Wuhan 430071, China}

\email{cuihongfei05@mails.gucas.ac.cn}

%\author[]{}

%\address{}

%\email{}

\thanks{{\it Mathematical classification (2000):} 37E05, 28D05, 37D25.}
\thanks{This work was partly supported by the NSFC grant 60534080.}

\begin{abstract}

  We consider dynamical systems given by interval maps with a finite
number of turning points (including critical points,
discontinuities) possibly of different critical orders from two
sides. If such a map $f$ is continuous and piecewise $C^2$,
satisfying negative Schwarzian derivative and some summability
conditions on the growth of derivatives and recurrence along the
turning orbits, then $f$ has finitely many attractors whose union
of basins of attraction has total probability, and each attractor
supports an absolutely continuous invariant probability measure
$\mu$. Over each attractor there exists a renormalization
$(f^m,\mu)$ that is exact, and the rates of mixing (decay of
correlations) are strongly related to the rates of growth of the
derivatives and recurrence along the turning orbits in the
attractors. We also give a sufficient condition for $(f^m,\mu)$ to
satisfy the Central Limit Theorem. In some sense, we give a fairly
complete global picture of the dynamics of such maps. Similarly,
we can get similar statistical properties for interval maps with
critical points and discontinuities under some more assumptions.

\end{abstract}

\maketitle

\section{Introduction and statement of results}

\subsection{Introduction}
In the last three decades, many results on statistical properties
were obtained for iterations of one dimensional maps, for example
the existence of absolutely continuous invariant probability
measures (acip for short), the decay of correlations and the
Central Limit Theorem. Various conditions have been shown to
guarantee the existence of acip and corresponding statistical
properties. In the area of interval maps with turning points
(including critical points, discontinuities), these results
generally require more restriction on the turning points, for
example the critical orders of each turning point from two sides
are equal in the continuous cases, and  positive Lyapunov
exponents at the critical
 values and the recurrence of turning point are sub-exponential in the discontinuous cases.

Our aim in this paper is to obtain the same conclusion but relax
as much as possible the conditions on the orbit of the critical
points, to include in particular cases in which the growth of
derivatives may be sub-exponential and/or the recurrence of the
turning points exponential, or the critical orders of turning
points from both sides do not equal. More precisely, we will show
the existence and finiteness of the number of acip for a general
map under two summability conditions on the growth of derivatives
and recurrence along the turning points, and study its statistical
properties such as decay of correlations and the Central Limit
Theorem. In these processes, we give a complete global picture of
the dynamics from a probabilistic perspective.

\subsection{Dynamical assumption}

We now give the precise statement of our assumption. Let
$\mathcal{A}$ denote the class of the interval map satisfying the
conditions formulated in Subsections 1.2.1-1.2.2 below. Denote
$\mathcal{A}_1$ as continuous maps in $\mathcal{A}$ and
$\mathcal{A}_2$ as discontinuous maps in $\mathcal{A}$
respectively.

\subsubsection{Critical set}

Let $M$ be a compact interval $[0,1]$ and $f:M\rightarrow M$ be a
piecewise $C^2$ map. This means that there exists a finite set
$\mathcal{C}$ such that $f$ is $C^2$ and a diffeomorphism on each
component of $M\setminus \mathcal{C}$, and $f$ admits a continuous
extension to the boundary so that both the left and the right
limits $f(c{\pm})= \lim_{x \rightarrow c\pm} f(x)$ exist. We
regard each $c\in \mathcal{C}$ as two points: $c+$, $c-$, the
concrete values depend on the corresponding one-side
neighborhoods. We assume that each $c\in \mathcal{C}$ has a
one-side critical order $l(c\pm)\in [1, \infty)$, this means that
$$
 |Df(x)|\approx
{|x-c|}^{l( c\pm)-1}, \ \ |f(x)-f(c\pm )|\approx{|x-c|}^{l(
c\pm)}, \ and\ |D^2f(x)|\approx {|x-c|}^{l(c\pm)-2}
$$
for $x$ in the corresponding one-side neighborhood of $c$,
 where we say $f\approx g$ if the ratio $f/g$ is bounded
above  and below uniformly in its domain. When we use the notion
$l(c)$, it may be either $l(c+)$ or $l(c-)$. If $l(c) = 1$, $c$ is
a bound derivative point, and if $l(c)> 1$ we say that $c$ is a
critical point. Note that $c$ may be a critical point on one side
and is a bound derivative point on the other side. When there is
no possibility of confusion, each point $c \in \mathcal{C}$ will
be called a critical point without distinguishing whether $c$ is
really a critical point with $l(c)>1$, or $c$ is a bounded
derivative point with $l(c)=1$.

%When there is no possibly of confusion, we use the term {\it
%critical point} to refer a point in $C$ without distinguishing
%whether $c$ is a critical point with $l(c)> 1$, $c$ is a singular
%point with $l(c) = 1$, or is a bound derivative point with
%$l(c)=1$.

We also assume that $f$ is with negative Schwarzian derivative
outside of $\mathcal{C}$, i.e., ${|Df|}^{-\frac{1}{2}}$ is a
convex function on each component of $M\setminus \mathcal{C}$. In
particular, if $f$ is continuous, and the critical orders with
$l(c)>1$ are equal from two sides for each critical point, we can
get rid of this assumption (but need to add a natural topological
assumption that all period points are hyperbolic repelling) by a
result of Kozlovski \cite{ko} (generalized to the multimodal
setting by van Strien and Vargas \cite{sv}).

\subsubsection{Summability conditions}

We suppose $f$ satisfies the following summability conditions
along the critical orbits. The first summability condition is

\begin{equation}\label{the first summability1}
\sum_{n=1}^{\infty} { \Big{(} \frac {
{|f^n(c)-\tilde{c}|}^{l(\tilde{c})} }
{{|f^n(c)-\tilde{c}|}^{l(c)}|Df^n(f(c))|} \Big{)} }^{1/(2l(c)-1)}
< \infty , \ \ \forall c \in \mathcal{C},
\end{equation}
where $\tilde{c}$ is the critical point closest to $f^n(c)$, and
 $l(c)$, $l(\tilde{c})$ depend on the corresponding one-side neighborhoods, and the second summability condition is
 \begin{equation}\label{the second summability}
\sum_{n=1}^{\infty} { \big{(} \frac{1}{|Df^n(f(c))|} \big{)}
}^{1/l(c)} < \infty, \ \ \forall c \in \mathcal{C}.
\end{equation}

One of the most simplest example satisfying the above conditions
is the contracting Lorenz maps considered in \cite{me} and
\cite{ro}, which motivated by the study of the return map of the
Lorenz equations near classical parameter values. Notice that
above summability conditions are satisfied if the derivative is
growing exponentially fast and the recurrence is not faster than
exponential in the sense that for each critical point $c\in
\mathcal{C}$, $|Df^n(f(c))|>{\lambda}^n$, for some $\lambda>1$,
and $|f^{n-1}(f(c))-\mathcal{C}|>{e}^{-\alpha
  n}$ for some $\alpha$ small enough,  for all $n\geq 1$.

{\remark According to the first summability condition, critical
points of $f\in \mathcal{A}$ are not on the forward orbits of the
critical set, i.e., $\mathcal{C}\cap\cup_{n\geq
1}f^n(\mathcal{C})=\emptyset$. It is easy to see that if all of
the critical orders are equal, one can get rid of the recurrence
condition containing in the first summability condition. }

\subsection{Statement of results}

In a previous paper, we have shown the following theorem under
some weaker summability conditions,
\begin{theorem}\label{existence}\cite{cd}
If $f$ satisfies assumption in Subsection 1.2.1 and summability
condition (\ref{the second summability}), and the following
summability condition
$$
\sum_{n=1}^{\infty} { \Big{(} \frac {
{|f^n(c)-\tilde{c}|}^{l(\tilde{c})} }
{{|f^n(c)-\tilde{c}|}^{l(c)}|Df^n(f(c))|} \Big{)} }^{1/l(c)} <
\infty, \forall c \in \mathcal{C},
$$
then $f$ admits an acip. Furthermore, if $l_{\max}>1$, then its
density is in $L^p$ for all $1\leq p
<\frac{l_{\max}}{l_{\max}-1}$, where $l_{\max}$ is the maximum of
the orders of the critical points.
\end{theorem}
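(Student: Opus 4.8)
The plan is to reduce to a piecewise expanding Markov map by an inducing (first-return) construction, produce an absolutely continuous invariant probability for that induced map by a folklore theorem, and then transfer it back to $f$, using the two summability hypotheses to keep the resulting measure finite and to bound the $L^p$ norm of its density. First I would fix, for each $c\in\mathcal{C}$, a small one-sided interval, and assemble from them a \emph{nice set} $V$: a finite union of open intervals around (one or both sides of) the points of $\mathcal{C}$ whose boundary meets neither the forward orbit of $\mathcal{C}$ nor its own forward orbit. Such a set exists by a standard Baire/measure argument, which applies precisely because, by the first summability condition (see the Remark), no critical point lies on $\cup_{n\ge 1}f^n(\mathcal{C})$. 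Since there are no wandering intervals and $f$ is "expanding away from $\mathcal{C}$'' in the usual sense, Lebesgue-a.e.\ point of $M$ eventually enters $V$, so the first-return map $F=f^{R}$ with $R\colon V\to\N$ the first return time is defined a.e.

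Each branch of $F$ is a diffeomorphic piece of $f^{R-1}$ followed by one monotone branch of $f$ landing in $V$; because the points of $\mathcal{C}$ are not returned to before time $R$, the only non-hyperbolic behaviour sits in this \emph{last} step, near a critical point of some one-sided order $l(c)$. The chain rule, together with the negative Schwarzian derivative (hence the Koebe principle on each monotone branch of $f^{R-1}$ that has definite space on at least one side), then yields uniformly bounded distortion on every branch of $F$; condition (\ref{the second summability}) is what gives enough backward contraction/expansion so that, after replacing $F$ by a fixed power if necessary, $\inf|DF|>1$. With the Markov property this places $F$ in the scope of the folklore theorem for piecewise $C^{1+\mathrm{bv}}$ expanding Markov maps, producing an $F$-invariant probability $\hat\mu$ whose density is bounded above (and bounded away from $0$ on its support).

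Next I would convert the summability hypotheses into a tail estimate for $R$. Up to bounded distortion, $\mathrm{Leb}\{R=n\}$ is comparable to the length of the corresponding image interval divided by $|Df^{n-1}(f(c))|$ and by the local inverse derivative of the last, critical, branch; an elementary computation — the one that makes the exponent $1/l(c)$ natural — bounds this above by a constant times the $n$-th term of the series displayed in the statement of the theorem. Hence $\int_V R\,d\mathrm{Leb}<\infty$, and since $\hat\mu$ has bounded density, $\int_V R\,d\hat\mu<\infty$ as well. The saturation $\mu=\sum_{k\ge 0}f^{k}_{*}\big(\hat\mu|_{\{R>k\}}\big)$ is then a finite $f$-invariant measure, absolutely continuous because $f^{k}_{*}$ of a bounded density is absolutely continuous; normalising it gives the desired acip.

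Finally, for the $L^p$ regularity when $l_{\max}>1$, I would track the singularities of $d\mu/d\mathrm{Leb}$. Pushing a bounded density through a one-sided critical branch of order $l$ at $c$ introduces near the critical value a factor $\asymp |x-f(c)|^{\,1/l-1}$, while monotone branches with bounded distortion keep the density comparable; thus each term $f^{k}_{*}(\hat\mu|_{\{R>k\}})$ has density dominated, near the relevant postcritical points, by $\mathrm{dist}(x,f(\mathcal{C}))^{\,1/l_{\max}-1}$ times a bounded factor. Summing over $k$ with the tail weights from the previous step, $d\mu/d\mathrm{Leb}$ is a summable superposition of functions that are locally $\lesssim \mathrm{dist}(x,\,\cdot\,)^{\,1/l_{\max}-1}$, and such a function lies in $L^p_{\mathrm{loc}}$ exactly when $p(1-1/l_{\max})<1$, i.e.\ $p<l_{\max}/(l_{\max}-1)$. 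The main obstacle is the distortion control in the second step: with discontinuities present one cannot always pull back a two-sided definite neighbourhood, so one must show that each relevant monotone branch of $f^{R-1}$ carries Koebe space on at least one side, or absorb the loss using condition (\ref{the second summability}) — and this is precisely where the hypotheses enter in an essential way.
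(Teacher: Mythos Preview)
This theorem is not proved in the present paper; it is quoted from \cite{cd}, so there is no in-paper proof to compare against. Your overall strategy---induce to a piecewise expanding Markov map, invoke a folklore theorem, show the return time is integrable via the summability hypotheses, push the resulting acip back to $f$, and read off the $L^p$ regularity from the critical-order singularity of the pushed-forward density---is the standard route for results of this type, and is in the same spirit as the binding-period inducing scheme the paper develops in Sections~4--5 for the finer tail estimates behind Theorems~2 and~3.

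There is, however, a concrete slip that would derail the estimates if taken literally. You take the nice set $V$ to be a neighbourhood of the critical \emph{points} and then claim that for the first-return map $F=f^{R}$ ``the only non-hyperbolic behaviour sits in this last step.'' But for $x\in V$ close to $c\in\mathcal{C}$ the small derivative $|Df(x)|\asymp |x-c|^{l(c)-1}$ occurs at the \emph{first} iterate; by definition $f^{R-1}(x)\notin V$ is bounded away from $\mathcal{C}$, so the final application of $f$ has derivative bounded above and below. Your Koebe argument (applied to the ``diffeomorphic piece $f^{R-1}$''), your formula for $\mathrm{Leb}\{R=n\}$, and your placement of the density singularity all rest on this mis-located critical branch and do not come out as written. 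The repair is either to take $V$ around the critical \emph{values} $f(\mathcal{C})$---then the critical step genuinely is the last one, and your $L^p$ computation with the factor $|x-f(c)|^{1/l-1}$ is exactly right---or to keep $V$ around $\mathcal{C}$ and rewrite $F=f^{R-1}\circ f$ with the critical branch first, redoing the distortion and tail bounds accordingly (this is closer to what the binding-period machinery in Section~3 is set up to handle). Either way the two summability conditions enter essentially as you indicate: one controls integrability of the return time, the other secures backward contraction and hence one-sided Koebe space and uniform expansion for the induced branches.
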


%In the unimodal case for equal critical orders from two sides,
%Nowicki and Van Strien proved same result under the assumption
%$\sum_{n=1}^{\infty} {( \frac  {1} {|Df^n(f(c))|})
%}^{\frac{1}{l(c)}} < \infty, \forall c \in C.$ in [].
In this paper, we will consider the general properties of acip,
including the finiteness of acip, the support of each acip and its
properties (decay of correlations, Central Limit theorem). In
general, if $f$ has many turning points, the acip $\mu$ need not
to be unique and not Lebesgue ergodic (note that unimodal maps
with negative Schwarzian derivative and equal critical orders from
both sides are ergodic
 with respect to Lebsegue measure). If $f\in \mathcal{A}_1$ ,
 then we can choose the minimal cycle (see Section 3) of
 $f$, show that there exists a unique acip $\mu$ support on the minimal
 cycle and the renormalization $f^m$ of $f$ corresponding the
 minimal cycle is exact, hence $(f^m,\mu)$ is mixing and ergodic.
 So it is natural to estimate the rates of mixing,
 quantified through the correlation functions
 $$
C_n(f^m,\mu) : = |\int (\varphi\circ f^m)\psi d\mu - \int\varphi
d\mu \int\psi d\mu|,
 $$
where $\varphi$ and $\psi$ are respectively bounded and
H$\ddot{o}$lder continuous functions on the minimal cycle.

Another important characterization of $\mu$ is by Central Limit
Theorem, which describes the oscillations of finite averages
$\frac{1}{n}\sum_{j=0}^{n-1}\varphi(f^j(x))$ around the expect
value $\int \varphi d\mu$. We say that $(f^m, \mu)$ satisfies the
Central Limit Theorem if given a H\"{o}lder continuous functions
$\varphi$ which is not coboundary ($\varphi \neq \psi\circ f-\psi
$ for any $\psi$), then there exists $\sigma >0$ such that for any
interval $I\subset R$,
$$
\mu \Big{\{} x\in M; \big{(}\frac{1}{\sqrt{n}} \sum_{j=0}^{n-1}
\varphi \circ(f^j(x))- \int \varphi d\mu \big{)} \in I \Big{\}}
\rightarrow \frac{1}{\sigma \sqrt{2\pi}}
\int_{I}e^{-\frac{t^2}{2{\sigma}^2}} dt, \ as \ n \to \infty.
$$

We shall state the results in this paper. Write
\begin{equation}
\gamma_n(c):= \min \Big{\{} { \Big{(} \frac
{{|f^n(c)-\tilde{c}|}^{l(\tilde{c})} }
{{|f^n(c)-\tilde{c}|}^{l(c)}|Df^n(f(c))|} \Big{)} }^{1/(2l(c)-1)},
\  \frac{1}{2} \Big{\}}  \ \ \ \forall c \in \mathcal{C},
 \end{equation}
%\begin{equation}\label{bn}
%b_n(c):={\Big{(} {\gamma_n(c)}^{l(c)-1} |Df^n(f(c))|
%{|f^n(c)-\tilde{c}|}^{l(c)-l(\tilde{c})} \Big{)}}^{-1/l(c)}, \
%\forall c \in C,
%\end{equation}
%and
\begin{equation}
d_n(c):=\min_{i<n} {(\frac{\gamma_i(c)}{|Df^i(f(c))|})}^{1/l(c)}
{|f^i(c)-\tilde{c}|}^{l(\tilde{c})/l(c)}, \ \forall c \in
\mathcal{C}.
\end{equation}
By the first summability condition and elementary calculations, we
have
 $d_n(c)\leq \gamma_{n-1}(c)$.

\begin{theorem}
Let $f\in \mathcal{A}_1$, then there exists at least one and at
most $\sharp \mathcal{C}$ different ergodic acips $\mu_i, 1\leq
i\leq N$. These measures are support on minimal cycles of
intervals with pairwise disjoint interiors. Over each such cycles
there exists a renormalization $(f^{m_i}, \mu_i)$ that is exact.
For a fixed minimal cycle with critical point $\mathcal{C}_c$, the
corresponding renormalization $(f^{m}, \mu)$ is mixing with the
following rates:

Polynomial case: If $d_n(c)\leq C n^{-\alpha}$, $\alpha>1$, $C>0$
for all $c\in \mathcal{C}_c$ and $n\geq 1$, then for each
$\tilde{\alpha}<\alpha-1$, there exists $\tilde{C}>0$ such that
$$
C_n(f^m,\mu)  \leq \tilde{C} n^{-\tilde{\alpha}}.
$$

Stretched exponential case: If $\gamma_n(c)\leq Ce^{-\beta
n^{\alpha}}$, $C>0$, $\alpha\in (0,1)$, $\beta>0$ for all $c\in
\mathcal{C}_c$ and $n\geq 1$, then for each $\tilde{\alpha}\in
(0,\alpha)$ there exist $\tilde{\beta}, \tilde{C}>0$ such that
$$
C_n(f^m,\mu)  \leq \tilde{C} e^{-\tilde{\beta}n^{\tilde{\alpha}}}.
$$

Exponential case: If $\gamma_n(c)\leq Ce^{-\beta n}$, $\beta>0$
for all $c\in \mathcal{C}_c$ and $n\geq 1$, then there exist
$\tilde{\beta}, \tilde{C}>0$ such that
$$
C_n(f^m,\mu)  \leq \tilde{C} e^{-\tilde{\beta}n}.
$$

If $d_n(c)\leq C n^{-\alpha}$, $C>0$, $\alpha>2$ for $c\in
\mathcal{C}_c$ and $n\geq 1$, then  $(f^m,\mu)$ satisfies the
Central Limit theorem. If $l_{\max}>1$ for critical point in the
minimal cycle, then the density of $\mu$ is $L^p$ for all $1\leq
p<\frac{\l_{\max}}{l_{\max}-1}$. The union of the basin $B(\mu_i)$
has full probability measure in the interval $M$.
\end{theorem}

For a map $f\in \mathcal{A}_2$, we suppose $f$ satisfies the
following

{\bf Density of preimages}: There exists $c\in \mathcal{C}$ whose
preimages are dense in some $J^*\in M$, where $J^*$ is a union of
intervals and satisfies $f(J^*)= J^*$.

{\remark The interval $J^*$ plays the same role to the notation of
the minimal cycle in the continuous case.}

\begin{theorem}
Let $f\in \mathcal{A}_2$ and satisfies the assumption of density
of preimages. Then $f$ has an acip $\mu$, and there exists an
integer $k>1$ such that $(f^k, \mu)$ is exact. For H$\ddot{o}$lder
continuous functions $\varphi$, $\psi$, $(f^k, \mu)$ is mixing
with the following rates:

Polynomial case: If $d_n(c)\leq C n^{-\alpha}$, $\alpha>1$, $C>0$
for $c\in J^*$ and $n\geq 1$, then for each
$\tilde{\alpha}<\alpha-1$, there exists $\tilde{C}>0$ such that
$$
C_n(f^k,\mu)  \leq \tilde{C} n^{-\tilde{\alpha}}.
$$

Stretched exponential case: If $\gamma_n(c)\leq Ce^{-\beta
n^{\alpha}}$, $C>0$, $\alpha\in (0,1)$, $\beta>0$ for  $c\in J^*$
and $n\geq 1$, then for each $\tilde{\alpha}\in (0,\alpha)$ there
exist $\tilde{\beta}, \tilde{C}>0$ such that
$$
C_n(f^k,\mu)  \leq \tilde{C} e^{-\tilde{\beta}n^{\tilde{\alpha}}}.
$$

Exponential case: If $\gamma_n(c)\leq Ce^{-\beta n}$, $C>0$,
$\beta>0$ for $c\in J^*$ and $n\geq 1$, then there exist
$\tilde{\beta}, \tilde{C}>0$ such that
$$
C_n(f^k,\mu)  \leq \tilde{C} e^{-\tilde{\beta}n}.
$$

If $d_n(c)\leq C n^{-\alpha}$, $C>0$, $\alpha>2$ for $c\in J^*$
and $n\geq 1$, then  $(f^k,\mu)$ satisfies the Central Limit
theorem. If $l_{\max}>1$ for all critical points in $J^*$, then
the density of $\mu$ is $L^p$ for all $1\leq
p<\frac{\l_{\max}}{l_{\max}-1}$.
\end{theorem}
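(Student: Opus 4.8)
The plan is to reduce the discontinuous case to the continuous machinery of Theorem 2 by constructing an appropriate induced (first-return or first-hitting) Markov tower over a reference interval, and then transcribe the decay and CLT arguments. First I would use the \textbf{density of preimages} hypothesis to fix a critical point $c$ whose preimages are dense in $J^*$, and pick a small interval $I_0\subset J^*$ around a suitable point (a preimage of $c$, or a point of the attractor with good recurrence), arranging that $I_0$ is nice in the sense of having pullbacks with uniformly bounded distortion. Because $f(J^*)=J^*$ and $f$ restricted to $J^*$ is topologically transitive (this is exactly what density of preimages buys, together with the summability conditions forcing no wandering or attracting behavior), there is $k>1$ with $f^k(I_0)\supset I_0$ up to the construction, and one builds the induced map $F = f^{R}$ on $I_0$ with return time $R$. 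The key quantitative input is that the tail $\mathrm{Leb}\{R>n\}$ is controlled by $d_n(c)$ (polynomial case) or $\gamma_n(c)$ (stretched-exponential / exponential cases): this is where the two summability conditions and the bound $d_n(c)\le \gamma_{n-1}(c)$ enter, via Koebe-type distortion estimates along orbit segments that stay away from $\mathcal{C}$, exactly as in the proof of Theorem 1 and Theorem 2.

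The second step is to verify that $F$ is a (Gibbs–)Markov map, or at least an Young-tower map with exponential return-time partition and bounded distortion, so that the Lai-Sang Young tower theory applies: exactness of $(f^k,\mu)$, and decay of correlations for Hölder observables governed by the return-time tail. Here I would invoke negative Schwarzian derivative (assumed outside $\mathcal{C}$) to get the Koebe principle, and the one-side critical order hypotheses $|Df(x)|\approx|x-c|^{l(c\pm)-1}$ to estimate how badly distortion degrades near a turning point; the $(2l(c)-1)$-th root in the definition of $\gamma_n(c)$ is precisely the exponent that appears when one balances the loss of expansion near a critical point of order $l(c)$ against the recurrence term $|f^n(c)-\tilde c|^{l(\tilde c)}$. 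Once the tower is in place, the polynomial, stretched-exponential and exponential rates for $C_n(f^k,\mu)$ follow from the corresponding tail bounds by the standard operator-renewal / coupling estimates (Young, Gouëzel), and the loss of one power in the polynomial case (the $\tilde\alpha<\alpha-1$) is the usual summation loss. The CLT under $d_n(c)\le Cn^{-\alpha}$ with $\alpha>2$ follows because then the return time is in $L^2$, so Liverani's or Young's CLT for the tower applies to non-coboundary Hölder observables; the $L^p$-regularity of the density for $l_{\max}>1$ comes from Theorem 1 applied to the renormalized system, since $f^k$ inherits the same critical orders.

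The \textbf{main obstacle} I anticipate is the construction and combinatorial control of the induced scheme in the \emph{discontinuous} setting: unlike the continuous multimodal case there is no a priori "minimal cycle" handed to us, so one must show that density of preimages plus the summability conditions genuinely force transitivity of $f|_{J^*}$ and the existence of the integer $k$ with $(f^k,\mu)$ exact — ruling out that $J^*$ decomposes into a proper cyclic union on which $f^k$ is not topologically mixing, and handling the fact that discontinuities of $f$ chop the pullbacks of $I_0$ into extra pieces, which must be shown not to destroy the bounded-distortion Markov structure. A secondary technical point is verifying that the reference interval $I_0$ can be chosen so that its boundary is not eventually mapped into $\mathcal{C}$ and so that the hyperbolic-times / return-time machinery from \cite{cd} transfers verbatim; I expect this to be routine given the first summability condition (which already guarantees $\mathcal{C}\cap\bigcup_{n\ge1}f^n(\mathcal{C})=\emptyset$), but it must be stated carefully. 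Modulo these points, the proof is a matter of checking that each estimate in the continuous proof of Theorem 2 has a discontinuous analogue, with $J^*$ playing the role of the minimal cycle as the remark indicates.
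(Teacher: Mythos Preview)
Your proposal is essentially correct and follows the same route as the paper: restrict to $J^*$, build an induced full Markov map on a reference interval, verify the Young tower hypotheses (bounded distortion via Koebe, tail bounds on the return time coming from the binding-period estimates), and then read off exactness, decay of correlations, the CLT, and the $L^p$ density from Young's results exactly as in the proof of Theorem~2.

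Two points where the paper is more specific than your sketch. First, the reference interval is taken to be a neighbourhood $\Omega$ of a \emph{critical point} $c\in J^*$, not of an arbitrary preimage; the density-of-preimages assumption then plays the same role in Lemma~\ref{full Markov} that density of critical preimages in the minimal cycle played for $\mathcal{A}_1$, allowing one to find, for each partition element $w$, a subinterval mapping diffeomorphically onto $\Omega$ within bounded extra time. Second, and more importantly, the integer $k$ is not produced by an ad hoc argument that $f^k(I_0)\supset I_0$: it is simply the greatest common divisor of all values of the return-time function $R(w)$, and exactness of $(f^k,\mu)$ then follows directly from the standard periodicity statement in Young's tower theorem. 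This bypasses entirely the obstacle you flag about ruling out a cyclic decomposition of $J^*$ --- no topological-mixing argument for $f|_{J^*}$ is needed, because the tower itself detects the correct period. With these two clarifications your outline matches the paper's argument; the construction of Sections~3--5 goes through verbatim on $J^*$ (as noted already in the $\mathcal{A}_2$ clauses of those sections), and the discontinuities of $f$ are absorbed into the critical set $\mathcal{C}$ from the outset.
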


\subsection{Comments on results}

The story about the existence of acip of interval maps has a long
history, see \cite{lu} and reference therein for a comprehensive
survey. Quit general conditions are known to guarantee the
existence of acip for uniformly expanding maps, for smooth maps
with critical points (for S-unimodal maps satisfying
Collect-Eckmann condition in \cite{ce} and Nowicki-van Strien
condition in \cite{ns}, and for multimodal maps under the most
general condition in this setting \cite{brss} recently), for
interval maps with critical points and singularities under
summability conditions in \cite{abv} recently, for smooth maps
with a countable number of critical point in \cite{ap}. Notice
that the results on the smooth case use weaker assumptions than
the case of interval maps with critical points and
discontinuities, an observation is that we only consider the
derivatives of the critical value on the smooth case
\cite{BSS,brss}, while we should need the assumption that the
recurrence of the critical point is not increasing rapidly for
interval maps with critical points and discontinuities.
%It is conjecture in \cite{abv} that it is possible to obtain a
%general result on the existence of acip in the case of interval
%maps with critical points and singularities by assuming only
%conditions on the derivative growth of critical values.
On the other hand, the results on the existence of acip in smooth
case need an assumption  that the critical orders of each critical
point should equal from two sides in general.

The results regarding decay of correlations and the Central Limit
Theorem
 for the unimodal maps with same critical orders were proved in \cite{yo92,kn}. In \cite{yo92},  Young considered quadratic maps
 satisfying Collect-Eckmann condition and critical recurrence at a sufficiently slow exponential rate, and
 proved that such maps have exponential decay of correlations and satisfy the Central Limit Theorem. Independently,
 Keller and Nowicki \cite{kn} obtained the same results for S-unimodal maps (with same critical orders) satisfying only
 the Collet-Eckmann condition. Later, in \cite{BLS}, Bruin et.al considered multimodal maps (with the same critical orders
 for all critical points and $l(c)>1, \forall c \in C$, note not only from both sides), obtained the same results to
Theorem 2
 under the summability condition $\sum_{n=0}^{\infty}{|Df^n(f(c))|}^{-\frac{1}{2l(c)-1}}< \infty$. Moreover, the construction in that
 paper made it possible to show a direct link between the rate of decay of correlations and the rate of growth of $|Df^n(f(c))|$.
Cedervall \cite{ce2} considered the interval maps with critical
points ( $l(c)>2$, critical
 orders may not equal, but need the critical orders of each critical
point are equal from two sides) under the assumption the
Collet-Eckmann condition and subexponential recurrence conditions,
and proved that such maps have finite number of acips, and
exponential decay of correlations and the Central Limit Theorem.
For the contracting Lorenz map $f$ satisfying
$|Df^n(f(c\pm))|>{\lambda}^n$, for each $n\geq 1$ and some
$\lambda>1$, and $|f^{n-1}(f(c\pm))-c|>{e}^{-\alpha
  n}$ for some $\alpha$ small enough, and for all $n\geq 1$, it
was shown that $f$ admits an acip which has exponential decay of
correlations in \cite{me}.

For interval maps with critical points and singularities, it was
obtained exponential decay of correlations and Central Limit
Theorem,  but need more additional assumptions in
\cite{dhl,da,ly}. Observe that the decay of correlations and
Central Limit Theorem hold for non-H\"{o}lder observables in
\cite{da,ly}.

Note that in the unimodal cases (continuous) with same critical
orders from two sides, $|Df^n(f(c))|\geq Ce^{\lambda n}, C>0,
\lambda>0$ if and only if there is a renormalizaiton $f^n$ with
exponential decay of correlations \cite{nsa}. We are not sure
whether it holds for unidmodal maps with different orders from two
sides, but according Theorem 2, we can get that if the unique
critical point satisfies $\gamma_n(c)\leq Ce^{-\beta n}$ for $C>0$
and $\beta>0$, then there is a renormalizaiton $f^n$ with
exponential decay of correlations.

If $0<l(c) < 1$, $c$ is called a singular point. Since the
negative Schwarzian derivative condition rules out the existence
of singularities, once one can get rid of the negative Schwarzian
derivative condition,  the results in this paper may easily
generalized to interval maps with critical points and
singularities.

\section{Ideas and organization of the proof}
To obtain decay of correlations and the Central Limit Theorem of
interval maps, a useful technique is based on Frobenius-Perron
operator or transfer operator. Exponential decay corresponds to a
gap in the spectrum of this operator, various technique has been
developed for proving the existence of this gap. For example, in
\cite{li}, it was shown that the Frobenius-Perron operator is
contracting with respect to Hilbert metric on defined cones of
density functions for an expanding map. Another powerful tool was
proposed by Young in \cite{yo98}\cite{yo99}. She has shown that
for an induced Markov map, the up-bound of the decay of
correlations are strongly related to the tail estimates of the
inducing time. Notice the method in \cite{yo98}\cite{yo99} could
capture mixing rates that are slower than exponential rates.

Our strategy is to apply the results of Young in \cite{yo99}, so
it is crucial for us to construct an induced Markov map based on
the general maps we considered. Induced Markov map has been
constructed before for unimodal maps with equal critical orders
and satisfying Collet-Eckmann condition, for multimodal maps with
equal critical orders (not only from two sides) and the summabilty
condition $\sum_{n}|Df^n(f(c))|^{-1/(2l(c)-1)}<\infty$ for any
$c\in \mathcal{C} $ \cite{BLS}, for multimodal maps with critical
points ( $l(c)>2$ critical orders may not equal, but need the
critical orders of each critical point from two sides are equal)
under the assumption the Collet-Eckmann condition and
subexponential recurrence conditions \cite{ce2}. Observed that all
the critical orders are equal (from both sides) is necessary in
the construction in \cite{BLS}\cite{ce2}.

In order to carry out almost the same construction \cite{BLS} of
an induced Markov map with corresponding estimates when the
critical points are allowed to have different orders from two
sides, we need do some modifications. Firstly, we need a new
definition of binding period, which involves the recurrence of the
critical points, see the details of the proof on Lemma
\ref{expanding} in Section 3. Secondly, we shall prove the
nonexistence of wandering intervals and backward bound contraction
property (BBC) for maps in $\mathcal{A}$, but we have shown these
results under a weaker condition in \cite{cd} for interval maps
without singularities, recently.

The structure of the paper is as follows. In Section 3, for $f\in
\mathcal{A}_1$, we identify the topological attractor and metric
attractor, and consider the intervals in the attractor. We use a
binding argument to obtain some estimations of growth in terms of
the derivative and recurrence along the appropriate critical
orbits. Similarly, we derived the same estimates for $f\in
\mathcal{A}_2$ on $J^*$. In Section 4, for $f\in \mathcal{A}_1$,
we choose any interval $J$ in the minimal cycle, and construct an
induced Markov map on $J$ which has uniformly distortion and of
the image bounded below on each element of the partition of $J$.
Then we give the induced time estimates. In Section 5, we
construct a full Markov map for an appropriate interval based on
the induced map constructed in Section 4. We also state some
estimates about this full Markov map. In Section 6, we apply
Young's result and present the proof of Theorem 2 and Theorem 3.
For readers who are familiar to the construction in \cite{BLS},
they can skip Sections 4,5, except the proof of Lemma
\ref{defintion reason}.

\section{Notations and some estimates}
For $f\in \mathcal{A}_1$, we will identify the attractors of $f$
which  be both topological and metric in this Section, then we
shall restrict $f$ to the attractors. For $f\in \mathcal{A}_2$, we
restrict $f$ to the interval $J^*$ directly. Taking a small
interval in the each attractor (or $J^*$), we use a binding
argument to obtain estimates in terms of the derivatives and
recurrence along the appropriate critical orbit. Similar argument
have been applied before by Jakobson \cite{ja} and Benedicks and
Carleson \cite{bc} under the strong conditions on $Df^n(f(c))$ and
on the recurrence along the critical orbit. The way we defined
here is imitated on \cite{BLS}, but \cite{BLS} only consider the
growth of $Df^n(f(c))$ in the definition of the binding period,
without the assumption on the rate of recurrence. However, our
definition of binding period relates the recurrence of $f^n(c)$,
then it is useful to tackle the case that the critical points have
different orders (including different orders from both sides).
This is the main point of this work.

A point is called a {\it period point} if $f^n(x)=x$ for integer
$n>0$, it is attracting if its basin include a interior. A general
notation of period orbit is the cycle of interval, if $J\subset M$
is a nontrivial closed interval for which there exists a positive
integer $n$ such that $f^n(J)\subset J$ and $n$ is the least such
integer, we call the set $\cup_{i=0}^{n}f^i(J)$ a {\it cycle} of
intervals for $f$ with period $n$. If the interiors of intervals
in the cycle are pairwise disjoint we say that the cycle is {\it
proper}. If a cycle contains no small cycle, we say it is {\it
minimal}. If $f$ has a proper cycle  $\cup_{i=0}^{n}f^i(J)$,
define $g:M \rightarrow M $ by $g=\Lambda^{-1}\circ f^n \circ
\Lambda$, where $\Lambda$ is an affine transformation  from $M$
onto $J$. We say $g$ is a {\it renormalization} of $f$.

The following Lemma collects some basic properties of proper cycle
and minimal cycle for continuous maps on $M$.
\begin{lemma}\label{cycle}
Let $f$ be a continuous map on $M$, then we have the following
statements: \item (1) the minimal cycle is a proper cycle, \item
(2) minimal cycles ether coincide or have disjoint interiors.
\end{lemma}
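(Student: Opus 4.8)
The plan is to prove the two statements about proper and minimal cycles using the basic combinatorics of how orbits of intervals under a continuous interval map can intersect. Recall that a cycle $\bigcup_{i=0}^{n-1} f^i(J)$ has period $n$ when $n$ is the least positive integer with $f^n(J) \subset J$; it is proper when the interiors $\mathrm{int}(f^i(J))$ are pairwise disjoint, and minimal when it contains no strictly smaller cycle.

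For statement (1), I would argue by contradiction: suppose the minimal cycle $\mathcal{O} = \bigcup_{i=0}^{n-1} f^i(J)$ is not proper, so there exist $0 \le i < j \le n-1$ with $\mathrm{int}(f^i(J)) \cap \mathrm{int}(f^j(J)) \neq \emptyset$. Applying the affine-type reduction (or just relabelling $f^i(J)$ as a new starting interval, using that $f$ maps intervals to intervals), one may assume $i = 0$, so $\mathrm{int}(J) \cap \mathrm{int}(f^j(J)) \neq \emptyset$ for some $0 < j < n$. The union $J \cup f^j(J)$ is then an interval $J'$, and I would show that $J'$ generates a cycle of period dividing $j$, hence of period strictly less than $n$: indeed $f^j(J') = f^j(J) \cup f^{2j}(J)$, and iterating the fact that consecutive images overlap along the orbit, one obtains a closed interval $K \supset J$ with $f^{j}(K) \subset K$. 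Since $j < n$ this produces a cycle strictly smaller than $\mathcal{O}$ and contained in it (its intervals are among unions of the $f^i(J)$), contradicting minimality. The key technical point here is verifying that overlapping consecutive images force $f^j$ to eventually map a slightly enlarged interval into itself — this is where one uses monotonicity of $f$ on each branch together with the observation that $f(A \cup B)$ is an interval when $A, B$ are overlapping intervals, so no wandering behaviour can occur.

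For statement (2), let $\mathcal{O}_1 = \bigcup_i f^i(J_1)$ and $\mathcal{O}_2 = \bigcup_i f^i(J_2)$ be two minimal cycles and suppose their interiors intersect. Then some $f^i(J_1)$ and $f^j(J_2)$ have overlapping interiors; their union is an interval $K$. The orbit $\bigcup_{k \ge 0} f^k(K)$ is a finite union of intervals (since there are only finitely many "combinatorial types" once branches and the finite critical set $\mathcal{C}$ are fixed — here I invoke the no-wandering-intervals / BBC framework alluded to in Section 2, or more elementarily the fact that a continuous interval map with only finitely many turning points cannot have infinitely many pairwise-disjoint images of a nondegenerate interval). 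That orbit-closure of $K$ contains a cycle $\mathcal{O}_3$, which by construction is contained in $\mathcal{O}_1 \cap \mathcal{O}_2$ up to taking interiors; by minimality of each $\mathcal{O}_i$, we must have $\mathcal{O}_3 = \mathcal{O}_1$ and $\mathcal{O}_3 = \mathcal{O}_2$, whence $\mathcal{O}_1 = \mathcal{O}_2$. So distinct minimal cycles cannot have intersecting interiors, i.e. they either coincide or have disjoint interiors.

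The main obstacle I expect is the bookkeeping in (1): showing cleanly that two overlapping images along a single periodic orbit force the existence of a genuinely shorter sub-cycle, without accidentally producing a "cycle" that fails to have a well-defined least period or that is not actually contained in $\mathcal{O}$. The cleanest route is probably to work with the smallest interval $L$ containing $J$ that is $f^d$-invariant for $d = \gcd$ of the "overlap lags", and to check directly that $\bigcup_{k=0}^{d-1} f^k(L) \subseteq \mathcal{O}$ with $d \mid n$, $d < n$; everything else is then routine topology of the interval.
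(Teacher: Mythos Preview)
The paper's own proof is a single sentence: ``These follow from the definition of the proper cycle and the minimal cycle immediately.'' No argument is supplied; the authors treat the lemma as a direct unpacking of the definitions.

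Your write-up is therefore far more elaborate than the paper's, and parts of it overreach. In (2) the lemma is stated for an \emph{arbitrary} continuous self-map of $M$, so you are not entitled to invoke BBC, the absence of wandering intervals, or the finiteness of the critical set---none of those hypotheses are in force at this point. There is also a concrete gap: with $K=f^i(J_1)\cup f^j(J_2)$, the forward orbit $\bigcup_{k\ge 0} f^k(K)$ lies only in $\mathcal{O}_1\cup\mathcal{O}_2$ (each piece $f^{i+k}(J_1)$, respectively $f^{j+k}(J_2)$, stays inside its own cycle), so the cycle $\mathcal{O}_3$ you extract need not be contained in $\mathcal{O}_1\cap\mathcal{O}_2$ as you assert, and the minimality step does not close as written. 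The clean route, which needs nothing beyond continuity, is to work with $\mathcal{O}_1\cap\mathcal{O}_2$ directly: it is forward invariant because each $\mathcal{O}_i$ is, and it is a finite union of closed intervals; if its interior is nonempty, the action of $f$ on its finitely many connected components yields a cycle inside the intersection, and minimality of each $\mathcal{O}_i$ then forces $\mathcal{O}_1=\mathcal{O}_2$.

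For (1) you correctly isolate the only substantive point and then explicitly leave it unfinished. The bookkeeping with $\gcd$'s and monotonicity on branches is unnecessary: if the interiors of two of the $f^i(J)$ overlap, the set $\bigcup_{i=0}^{n-1} f^i(J)$ has strictly fewer than $n$ connected components, $f$ maps components into components, and the component $K\supset J$ satisfies $f^m(K)\subset K$ for some $m<n$, giving a cycle contained in the original one and contradicting minimality.
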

\begin{proof}
These follow from the definition of the proper cycle and the
minimal cycle immediately.
\end{proof}

\begin{prop}
If $f$ is a map in $\mathcal{A}_1$, then $f$ has finite
renormalizations, and $f$ has at least one and at most $\sharp
\mathcal{C}$ minimal cycles. Moreover, $\cup_{i=0}f^{-i}(c)$ for
each $c$ in the minimal cycle is dense in the corresponding
minimal cycle.
\end{prop}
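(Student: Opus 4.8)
The plan is to prove the three assertions in order, exploiting the topological structure of continuous interval maps together with the negative Schwarzian derivative and summability hypotheses. First I would establish that $f$ has only finitely many minimal cycles, bounded by $\sharp\mathcal{C}$. The key observation is that a minimal cycle of intervals must "attract" a turning point: by the nonexistence of wandering intervals for maps in $\mathcal{A}$ (which the paper has announced, relying on \cite{cd}) together with negative Schwarzian derivative, any minimal cycle $\cup_{i=0}^{n-1}f^i(J)$ either supports an attracting periodic orbit or its renormalization $f^n|_J$ behaves like a map whose interval is not further renormalizable, and in the latter case the classical Singer-type argument (no attracting or neutral periodic orbits away from the critical set under negative Schwarzian, combined with the Mañé-type hyperbolicity away from $\mathcal{C}$) forces the orbit closure of some $c\in\mathcal{C}$ to meet the interior of $J$, indeed to be contained in the cycle. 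Since by Lemma \ref{cycle}(2) distinct minimal cycles have disjoint interiors, and each one consumes at least one critical point, there are at most $\sharp\mathcal{C}$ of them; finitely many renormalizations follows because each renormalization is subordinate to a nested sequence of proper cycles, and the summability condition \eqref{the first summability1} (which puts critical points off the forward critical orbit, as noted in the Remark) rules out infinitely deep renormalization.

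Second, for existence of at least one minimal cycle: the forward orbit of any $c\in\mathcal{C}$ has a nonempty $\omega$-limit set, and using the summability/expansion estimates (so that no critical point is periodic and the dynamics is "non-trivially recurrent") I would show $\omega(c)$ is contained in a cycle of intervals; taking a smallest such cycle around it yields a minimal cycle. Concretely, one takes the smallest closed interval $J$ containing infinitely many $f^i(c)$ with $f^n(J)\subseteq J$ for some minimal $n$; minimality of the cycle is then obtained by shrinking: if it contained a strictly smaller subcycle one could replace $J$, and this process terminates because the number of critical points in the cycle strictly decreases (again using that critical points avoid forward critical orbits).

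Third — and this I expect to be the main obstacle — the density of $\cup_{i\ge 0}f^{-i}(c)$ in the minimal cycle for each critical point $c$ lying in that cycle. The natural route is: let $J_0$ be a component of the cycle and consider the renormalization $g=f^m$ restricted to $J_0$. I would argue that $g|_{J_0}$ is \emph{topologically transitive} on $J_0$ (equivalently, the cycle is a "minimal" or "basic" piece): if not, there would be a proper $g$-invariant subinterval, contradicting minimality of the cycle, OR the transitive piece would be a strictly smaller cycle. Once transitivity is in hand, every point — in particular $c$ — has a dense forward orbit under $g$ on some residual set, but what we need is density of \emph{backward} orbits of $c$. For this one uses that $c$ is itself in the cycle and that, by transitivity plus the absence of wandering intervals, for any subinterval $U\subset J_0$ there is $i$ with $f^i(U)\ni c$ (an interval cannot shrink under forward iteration without hitting a critical point infinitely often, so some image of $U$ straddles $c$); hence $U\cap f^{-i}(c)\ne\emptyset$. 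The delicate point is ruling out the possibility that images of $U$ grow but systematically avoid $c$ while still filling the cycle — here is exactly where one needs transitivity of the renormalization and the fact that the cycle contains \emph{no} smaller invariant structure. I would handle this by the standard dichotomy: either some forward image of $U$ contains a critical point in the cycle (and then, by transitivity and finitely many critical points, one in particular eventually contains $c$), or the forward images of $U$ never meet $\mathcal{C}$, so $f^i|_U$ is a diffeomorphism for all $i$, forcing (via negative Schwarzian and the Koebe principle) that $U$ is a wandering interval or lies in the basin of an attracting cycle — both excluded. This establishes density of $\cup_i f^{-i}(c)$ in $J_0$, and applying $f$ propagates it to every component of the cycle.
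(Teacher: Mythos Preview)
Your arguments for the upper bound $\sharp\mathcal{C}$ and for density of preimages are essentially the paper's: each minimal cycle must contain a critical point (the paper does this by the elementary observation that otherwise the return map $f^n$ or $f^{2n}$ is monotone on $J$ and hence has a non-repelling fixed point, contradicting Singer's theorem---your invocation of Ma\~n\'e is heavier machinery than needed but not wrong), and for density one reduces to the nonexistence of homtervals plus minimality of the cycle (if preimages of $c$ miss some subinterval $U$, the forward images of $U$ generate a strictly smaller cycle).

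The genuine gap is in your finiteness-of-renormalizations argument. You claim the summability condition, via ``critical points off the forward critical orbit,'' rules out infinitely deep renormalization. It does not: an infinitely renormalizable map can have a single non-periodic critical point sitting in every level of the nested cycles (think of the Feigenbaum fixed point). The mechanism the paper uses is different and is the key step you are missing: since $f$ has no wandering intervals and no attracting or neutral periodic orbits, there exists $\delta>0$ such that for \emph{every} nontrivial interval $J\subset M$ one has $|f^n(J)|\ge\delta$ for all sufficiently large $n$ (the ``contraction principle''). Consequently every interval constituting any proper cycle has length at least $\delta$, so the period of any proper cycle is bounded by $\delta^{-1}$, and there are only finitely many proper cycles. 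This also gives existence of a minimal cycle for free---start from $M$ (a cycle of period one) and descend through strictly smaller cycles; the process terminates---so your $\omega$-limit-set construction for existence, with its termination argument via ``the number of critical points in the cycle strictly decreases,'' is unnecessary and in fact that termination claim is not obviously true either.
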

\begin{proof}
At first, since $f$ satisfies some summability conditions and
negative Schwarzian derivative, $f$ has no attracting or neutral
periodic orbits by Singer's Theorem \cite{SI}. On the other hand,
$f$ has no wandering interval by \cite{cd}, thus from the
contraction principle, there exists $\delta>0$ such that  for any
interval $J\in M$ with $|J|>0$,  there exists $N_J>0$ such that we
have $f^n(J)>\delta$ for $n>N_J$, Hence all intervals constituting
a cycle have the length greater than $\delta$, this implies that
the period of any proper cycle is bound by ${\delta}^{-1}$. Thus
$f$ has a finite number of renormalizations.

Next, we suppose that all intervals in a cycle
$\cup_{i=0}^{n}f^i(J)$ don't contain a turning point of $f$ in its
interior, then $f^n$ or $f^{2n}$ is monotone increasing and
continuous on $J$, and $f^n(J)\subset J$ or $f^{2n}(J)\subset J$,
this contradicts $f$ has no attracting or neutral period orbits,
then $f$ has at most $\sharp \mathcal{C}$ minimal cycles. On the
other hand, if $M$ is not a minimal cycle of period one, by the
finiteness of proper cycles, then there is a cycle could contain
no small cycle, i.e., this is a minimal cycle.

Finally, we consider a minimal cycle $\cup_{i=0}^{n}f^i(J)$, $c$
is a critical point in $\cup_{i=0}^{n}f^i(J)$. Because $f$ has no
wandering interval, and has no attracting and neutral periodic
orbits, $f$ has no homterval. Therefore, any small interval in the
cycle will eventually visit the critical point in the cycle. If
the preimages of $c$ isn't dense in the minimal cycle, we can find
a cycle smaller the minimal cycle, this would induce a
contradiction.

\end{proof}

For $f\in \mathcal{A}_1$ and each minimal cycle
$X:=\cup_{i=0}^{m}f^i(J)$ with period $m$, denote
$\mathcal{C}_c\in \mathcal{C}$ as critical set in $X$, we will
consider the subsystem $(X, f^m)$. For $f\in \mathcal{A}_2$, we
consider the subsystem $(J^*, f)$ directly, denote
$\mathcal{C}_c\in \mathcal{C}$ as critical set in $J^*$ too.

For $x\in X$ or $x\in J^*$, let $c$ be the critical point closest
to $x$, Given a critical neighborhood $\bigtriangleup$ of
$\mathcal{C}_c$, we define the {\it binding period} as follows,

\begin{equation}
p(x):=
\begin{cases} \max \Big{\{}p; |f^k(x)-f^k(c)|\leq \gamma_k(c)|f^k(c)-\mathcal{C}|,  \ 1\leq k \leq p-1 \Big{\}}  \ \ & \  x \in
\bigtriangleup,
\\
0  \ \ & \  x\notin \bigtriangleup.
   \end{cases}
   \end{equation}

The size of the critical neighborhood $\bigtriangleup$ depends on
the following Lemmas.

\begin{lemma}(BBC property)\label{bbc}
Let $f\in \mathcal{A}$, then there exists $K>0$ such that for all
$\delta_0>0$ there exist $0<\delta<\delta_0$,
$\bigtriangleup_{\delta}=\cup_{c\in \mathcal{C}}(c-\delta,
c+\delta)$, for each $x\in M$ we have
\begin{equation}
|Df^n(x)|>K, \ \ \ where \  n=\min\{i\geq 0; f^i(x)\in
\bigtriangleup_{\delta}\}.
\end{equation}
\end{lemma}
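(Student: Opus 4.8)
The plan is to establish the \emph{backward bound contraction} property by the standard argument for maps with negative Schwarzian derivative, combined with the nonexistence of wandering intervals proved in \cite{cd}. The key point is that the hypothesis on $f$ (negative Schwarzian, summability, hence no attracting or neutral periodic orbits by Singer's theorem, hence no homtervals) forces backward contraction toward the critical set on a complement of an arbitrarily small neighborhood $\bigtriangleup_\delta$.

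First I would set up the contraction principle: since $f$ has no wandering intervals and no attracting/neutral periodic orbits (as used already in the proof of the Proposition above), for every $\varepsilon>0$ there is an $N$ such that any interval $I$ with $f^j(I)\cap \mathcal{C}=\emptyset$ for $0\le j\le n$ and $|f^j(I)|<\varepsilon$ for all such $j$ must have $n\le N$; equivalently the pieces of orbit that stay $\varepsilon$-small and avoid $\mathcal{C}$ have uniformly bounded length. Combined with the Koebe principle (valid because of negative Schwarzian derivative), one gets that the branches of $f^{-n}$ defined on intervals landing in a fixed small scale have uniformly bounded distortion. The standard consequence, which is exactly the BBC statement, is: there is $K_0>0$ and, for each $\delta_0$, a $\delta<\delta_0$ so that whenever $x$ and $n=\min\{i\ge 0: f^i(x)\in\bigtriangleup_\delta\}$, the derivative $|Df^n(x)|$ is bounded below by $K_0$. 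Concretely I would take the maximal interval $T\ni x$ on which $f^n$ is monotone with $f^n(T)$ a component of $M\setminus\bigtriangleup_{\delta}$ (or of a slightly larger scale), apply Koebe on $T$ using that the two components of $f^n(T)\setminus f^n(x)$ each have length comparable to $\delta$, and conclude that $|Df^n(x)|\gtrsim |f^n(T)|/|T|\gtrsim \delta/|M|$, which after absorbing the $\delta$-dependence into the choice of the neighborhood and the constant gives a uniform lower bound $K$. The choice of $\delta$ (rather than an arbitrary given neighborhood) is what lets us control the geometry at the landing scale; this is why the statement quantifies as ``for all $\delta_0$ there exists $\delta<\delta_0$''.

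The main obstacle, and the place where genuine work beyond the classical unimodal/multimodal case is needed, is that here the critical points may have \emph{different critical orders from the two sides} and that $f\in\mathcal{A}$ may be discontinuous at $\mathcal{C}$. So the pullback intervals $T$ can straddle, or abut, a one-sided critical point with asymmetric behavior, and one must check that the Koebe-space on both sides of $f^n(x)$ survives under pullback — this is precisely the ``no wandering interval and BBC for maps in $\mathcal{A}$'' assertion announced in Section 2, which the author says is obtained in \cite{cd} under a weaker condition for interval maps without singularities. I would therefore reduce to that reference: since the negative Schwarzian assumption rules out singular points ($l(c)<1$), every $f\in\mathcal{A}$ falls within the scope of \cite{cd}, and the BBC property there yields the present Lemma verbatim. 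The only gluing needed is to observe that the constant $K$ and the correspondence $\delta_0\mapsto\delta$ can be taken uniform over $\mathcal{C}$ because $\mathcal{C}$ is finite. This finishes the proof modulo citing \cite{cd}; if one wished to be self-contained, the two ingredients to expand are (i) the Koebe/distortion estimate for one-sided critical orders, handled by the defining relations $|Df(x)|\approx|x-c|^{l(c\pm)-1}$, and (ii) the finite-length-of-orbit-pieces statement, handled by the contraction principle as in the proof of the Proposition above.
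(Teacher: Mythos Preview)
Your proposal is correct and ultimately takes the same approach as the paper: the paper's entire proof is the single line ``See Theorem C in \cite{cd}'', and you likewise reduce the lemma to that citation after some heuristic discussion. One caution about your preliminary sketch: the Koebe argument you outline yields $|Df^n(x)|\gtrsim \delta/|M|$, which is $\delta$-dependent, whereas the statement fixes $K$ \emph{before} $\delta_0$; obtaining a $\delta$-independent constant is precisely the nontrivial content handled in \cite{cd}, so your instinct to defer to that reference rather than claim the sketch as a self-contained proof is the right call.
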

\begin{proof}
See Theorem C in \cite{cd}.
\end{proof}

{\remark In fact, BBC holds for maps satisfying a weaker condition
than $\mathcal{A}$ \cite{cd}. For symmetric unimodal maps with
negative Schwarzian derivative, BBC holds \cite{MM}. For the
multimodal case which be with the same critical orders of all
critical points and an increasing condition
$$\lim_{n\to \infty} |Df^n(f(c))|=\infty, \ \ \forall c\in \mathcal{C},$$ it was shown in
\cite{BS03}. }

\begin{lemma}\label{Uniformly expanding}(Uniformly expanding outside of $\bigtriangleup_{\delta}$) Let $f\in \mathcal{A}$, there
exist $C({\delta})>0$ and $\lambda_{\delta}>0$ such that for
$x,f(x), \dots, f^{n-1}(x)\notin \bigtriangleup_{\delta}$, then
\begin{equation}
|Df^n(x)|\geq C({\delta}) e^{\lambda_{\delta}n}.
\end{equation}
\end{lemma}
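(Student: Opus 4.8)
The plan is to prove this as a Mañé-type hyperbolicity statement, following the scheme used for the equal-order multimodal case in \cite{BLS}, with the backward bounded contraction property (Lemma \ref{bbc}, i.e.\ Theorem C of \cite{cd}) playing the role that $C^2$-smoothness on all of $M$ plays in Mañé's original argument. First I would record two elementary facts. (i) Since each $l(c)\in[1,\infty)$, the asymptotics $|Df(x)|\approx|x-c|^{l(c)-1}$ together with $f$ being a $C^2$ diffeomorphism on each component of $M\setminus\mathcal{C}$ yield a constant $\kappa=\kappa(\delta)>0$ with $|Df(x)|\geq\kappa$ for all $x\in M\setminus\bigtriangleup_{\delta}$. (ii) By Singer's theorem \cite{SI} and the nonexistence of wandering intervals \cite{cd}, $f$ has no attracting or neutral periodic orbits and hence no homtervals (exactly as used in the proof of the Proposition above); consequently there is no interval $W$ all of whose forward iterates $f^j(W)$, $j\geq0$, avoid a fixed neighborhood of $\mathcal{C}$.

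The heart of the matter is an \emph{eventual expansion} estimate: there exist $N_0=N_0(\delta)\in\N$ and $\Lambda=\Lambda(\delta)>1$ such that $|Df^{N_0}(x)|\geq\Lambda$ whenever $x,f(x),\dots,f^{N_0-1}(x)\notin\bigtriangleup_{\delta}$. Granting this, the Lemma follows by concatenation: given $x$ with $x,\dots,f^{n-1}(x)\notin\bigtriangleup_{\delta}$, write $n=kN_0+r$ with $0\leq r<N_0$; each of the $k$ blocks begins at a point whose next $N_0$ iterates avoid $\bigtriangleup_{\delta}$, hence contributes a factor $\geq\Lambda$, while the remaining $r$ iterates contribute a factor $\geq\kappa^{N_0}$ by fact (i). Thus $|Df^n(x)|\geq\kappa^{N_0}\Lambda^{k}\geq C(\delta)\,e^{\lambda_{\delta}n}$ with $\lambda_{\delta}=(\log\Lambda)/N_0$ and $C(\delta)=\kappa^{N_0}\Lambda^{-1}$.

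To prove eventual expansion I would argue by contradiction. If it fails, then for every $N$ there is $x_N$ with $x_N,\dots,f^{N-1}(x_N)\notin\bigtriangleup_{\delta}$ yet $|Df^{N}(x_N)|\leq 2$. Let $T_N\ni x_N$ be the maximal interval on which $f^{N}$ is monotone; its endpoints are mapped into $\mathcal{C}$ within $N$ steps. Negative Schwarzian derivative provides the minimum principle (so $\min_{T_N}|Df^{N}|$ is attained at an endpoint of $T_N$) together with the Koebe distortion estimate; combining these with Lemma \ref{bbc}, applied along the orbit of an endpoint of $T_N$ up to its first entry into $\bigtriangleup_{\delta}$, one controls the distortion of $f^{N}$ on a subinterval $W_N\ni x_N$ of definite length, so that $f^{N}|_{W_N}$ is a diffeomorphism of bounded distortion whose intermediate images $f^{j}(W_N)$, $0\leq j\leq N$, are uniformly short and hence disjoint from $\bigtriangleup_{\delta/2}$. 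Passing to a limit $x_{N_k}\to x_{*}$, the $W_{N_k}$ contain a fixed interval $W$, and a diagonal argument produces an interval all of whose forward iterates avoid $\bigtriangleup_{\delta/2}$, i.e.\ a homterval, contradicting fact (ii).

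The step I expect to be the main obstacle is precisely this distortion control: the monotonicity intervals $T_N$ are long and, crucially, are \emph{not} disjoint from $\bigtriangleup_{\delta}$, so negative Schwarzian must be exploited through Koebe with care — using that the orbit of $x_N$ remains in the compact set $M\setminus\bigtriangleup_{\delta}$, so $f^N(T_N)$ cannot reach too deeply into $\bigtriangleup_\delta$ and one retains definite Koebe space — and one must verify that the constant $K$ from Lemma \ref{bbc} and the lower bound $\kappa(\delta)$ combine so that only a bounded multiplicative loss is incurred at the two ends of each orbit segment. Modulo these points the argument is that of \cite{BLS}, with Lemma \ref{bbc} supplying the input needed to accommodate discontinuities and critical points of unequal one-sided orders.
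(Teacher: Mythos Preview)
Your route is genuinely different from the paper's, and considerably harder. The paper does not reprove a Ma\~n\'e-type statement from scratch; instead it observes that, since $f\in\mathcal{A}$ has negative Schwarzian, all periodic orbits of $f$ are repelling by Singer's theorem \cite{SI}, and then \emph{modifies} $f$ inside $\bigtriangleup_{\delta}$ to a map $\tilde f$ which is globally $C^2$ and agrees with $f$ on $M\setminus\bigtriangleup_{\delta}$. Any orbit segment $x,f(x),\dots,f^{n-1}(x)$ lying in $M\setminus\bigtriangleup_{\delta}$ is equally an orbit segment of $\tilde f$, and any periodic orbit of $\tilde f$ contained in $M\setminus\bigtriangleup_{\delta}$ is a periodic orbit of $f$, hence repelling. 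Ma\~n\'e's theorem \cite{MA} then applies to $\tilde f$ as a black box and yields the estimate. This bypasses entirely the distortion-control step you identify as the main obstacle.

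Your concatenation reduction to an eventual-expansion statement is correct, but the contradiction argument you sketch is incomplete at exactly the point you flag. In particular, the minimum principle on the maximal monotone interval $T_N$ tells you only that $\min_{T_N}|Df^N|$ is attained at an endpoint, where it equals $0$ (since an endpoint is mapped into $\mathcal C$ in fewer than $N$ steps); this gives no lower bound and does not by itself deliver an interval $W_N\ni x_N$ of $N$-independent length whose \emph{intermediate} images are all short. Extracting such a $W_N$ requires a genuine distortion argument with controlled Koebe space, and your sketch does not supply it. The paper's modification-plus-Ma\~n\'e trick avoids this difficulty altogether; incidentally, \cite{BLS} uses the same trick rather than the direct scheme you attribute to it.
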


\begin{proof}
 Since $f\in \mathcal{A}$, all the periodic orbits of $f$ are
 repelling \cite{SI}. We can define a new map $\tilde{f}$ such that $\tilde{f}$
 is $C^2$ in $\bigtriangleup_{\delta_1}$ and has no change with  $f$ outside of
 $\bigtriangleup_{\delta_1}$, then the above is a consequence of Ma{\~n}{\'e}
 theorem \cite{MA}.
\end{proof}

\begin{lemma}\label{small}
Suppose $G_p\geq 0$ and $\sum_pG_p< \infty$, then for any
$\zeta>0$ there exists $p_0>0$ such that
\begin{equation}
\sum_{s\geq 1}\sum_{(p_1,\dots,p_s) and p_i\geq
p_0}\prod_{p_i}\zeta G_{p_i}\leq 1.
\end{equation}
\end{lemma}
\begin{proof}
See  \cite{BLS}.
\end{proof}

For each $c\in \mathcal{C}_c$ or $c\in J^*$, let
$\bigtriangleup:=\cup_{c\in \mathcal{C}_c \ or \ J^*}(c-\delta, \
c+\delta),\ p_{\delta}:=p(c+\delta)\ or \ p(c-\delta)$ depending
the neighborhood we consider. Note that $p_{\delta}\rightarrow
\infty$ as $\delta \rightarrow 0$. Using Lemmas [\ref{bbc},
\ref{Uniformly expanding}, \ref{small}], and the summability of
$\gamma_n(c)$ for each $c$, we can fix at this moment and for the
rest of the paper $\delta$ so small that

(1) BBC holds and uniformly expanding outside of $\bigtriangleup$,

(2)$\sum_{s\geq 1}\sum_{(p_1,\dots,p_s)\  and\ p_i\geq
p_{\delta}}\prod_{p_i}\zeta \gamma_{p_i}(c)\leq 1$ for all $c\in
\mathcal{C}_c \ or\  J^{*}$, where $\zeta$ is a constant which
depends only the map itself and will be specified explicitly in
the proof, see the proof of Lemma \ref{defintion reason}.

The following Lemma gives an estimation of derivative growth for
points in $\bigtriangleup$.

\begin{lemma}\label{expanding}
Let $f\in \mathcal{A}$, denote $I_p:=\{x; \ \ p(x)=p\}$,  and for
each critical point $c\in \mathcal{C}_c$ (or $J^*$), put
$DF_p(c):=\min\{|Df^p(x)|; x\in I_p \cap \bigtriangleup \}$, then
there exists constant $K_1>0$ such that
\begin{equation}
DF_p(c)\geq K_1\gamma^{-1}_p(c).
\end{equation}
\end{lemma}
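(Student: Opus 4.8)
The plan is to exploit the negative Schwarzian derivative together with the definition of the binding period to bound $|Df^p(x)|$ from below along the orbit of a point $x$ with $p(x)=p$. Fix a critical point $c\in\mathcal{C}_c$ (or in $J^*$) and $x\in I_p\cap\bigtriangleup$ with $c$ the critical point closest to $x$. The idea is to compare the orbit of $x$ with that of $c$: by definition of $p(x)$, for $1\le k\le p-1$ we have $|f^k(x)-f^k(c)|\le\gamma_k(c)|f^k(c)-\mathcal{C}|$, so the two orbits stay in roughly proportional position. Write $|Df^p(x)| = |Df(x)|\cdot|Df^{p-1}(f(x))|$ and treat the two factors separately. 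For the first factor, the critical-order assumption gives $|Df(x)|\approx|x-c|^{l(c)-1}$. For the second factor, I would use the Koebe-type distortion control available under negative Schwarzian derivative (via Lemma \ref{bbc} and Lemma \ref{Uniformly expanding} for the non-binding portions, and a direct comparison on the binding portion) to show that $|Df^{p-1}(f(x))|$ is comparable to $|Df^{p-1}(f(c))|$ up to a bounded factor, because the orbits of $f(x)$ and $f(c)$ shadow each other on the interval whose endpoints are these two points, and this interval is never too distorted.

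The second, and more delicate, ingredient is to estimate $|x-c|$ in terms of $p$. Since $p(x)=p$ (and not larger), the binding condition fails at time $p$: $|f^p(x)-f^p(c)|>\gamma_p(c)|f^p(c)-\mathcal{C}|$. On the other hand, by the mean value theorem and the distortion bound just discussed, $|f^p(x)-f^p(c)|\approx|Df^p(\xi)|\,|x-c|$ for some $\xi$ between $x$ and $c$, and this in turn is comparable to $|Df(x)|\cdot|Df^{p-1}(f(c))|\cdot|x-c|\approx |x-c|^{l(c)}\,|Df^{p-1}(f(c))|$. Using $|f^p(c)-\mathcal{C}|\approx|f^p(c)-\tilde c|$ where $\tilde c$ is the closest critical point and recalling the definition of $\gamma_p(c)$ (which roughly is $\big(|f^p(c)-\tilde c|^{l(\tilde c)-l(c)}/|Df^p(f(c))|\big)^{1/(2l(c)-1)}$, capped at $1/2$), one solves the resulting inequality for $|x-c|$ and obtains a lower bound of the form $|x-c|^{l(c)-1}\gtrsim \gamma_p(c)^{?}\,/\,|Df^{p-1}(f(c))|^{?}$; after substituting back, the exponents $1/(2l(c)-1)$ in the definition of $\gamma_p$ are precisely tuned so that the product $|Df(x)|\cdot|Df^{p-1}(f(c))| \gtrsim \gamma_p(c)^{-1}$. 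This is exactly the algebraic identity that the exponent $2l(c)-1$ is designed to produce.

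The main obstacle I expect is controlling the distortion of $f^{p-1}$ along the orbit segment between $f(x)$ and $f(c)$ \emph{uniformly in $p$}: the binding condition only gives that the two orbits stay within $\gamma_k(c)|f^k(c)-\mathcal{C}|$ of each other, so one must check that the total distortion $\sum_{k}$ of the relevant logarithmic derivative differences is summable — this is where the summability of $\gamma_n(c)$ and condition (2) fixing $\delta$ (so that $\sum\prod\zeta\gamma_{p_i}(c)\le 1$) enter, and where the choice of the constant $\zeta$ gets pinned down. One has to be careful near the times when $f^k(c)$ itself comes close to a critical point $\tilde c$, since then $|Df|$ blows down; the factor $|f^k(c)-\tilde c|^{l(\tilde c)}$ appearing in $\gamma_k$ is what compensates for this, and the case analysis (whether $f^k(x)$ and $f^k(c)$ are on the same side of $\tilde c$ or not, and comparing their distances to $\tilde c$) is the technical heart. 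Once the distortion is under control, the constant $K_1$ comes out of the uniform $\approx$-constants in the critical-order hypothesis and the Koebe constant associated with the fixed $\delta$; I would assemble these at the end.
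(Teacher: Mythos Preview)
Your outline is correct and matches the paper's strategy: factor $|Df^p(x)|=|Df(x)|\cdot|Df^{p-1}(f(x))|$, control the distortion of $f^{p-1}$ on $[f(x),f(c)]$ via the binding condition, then use the \emph{failure} of binding at time $p$ to bound $|f(x)-f(c)|$ (hence $|x-c|$) from below, and finally verify that the exponent $1/(2l(c)-1)$ makes the algebra close up to $\gamma_p(c)^{-1}$.

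The distortion step is, however, more elementary than you anticipate. The paper does not invoke Koebe, nor Lemmas~\ref{bbc} or~\ref{Uniformly expanding}, nor any case analysis on which side of $\tilde c$ the points lie. Instead it bounds the generalized distortion directly by
\[
D(f^k,[f(x),f(c)])\ \le\ \prod_{j=1}^{k-1}\Bigl(1+\frac{\sup_{B_j}|D^2f|}{\inf_{B_j}|Df|}\,|B_j|\Bigr)\ \le\ \prod_{j=1}^{k-1}\bigl(1+2K_l^2\,\gamma_j(c)\bigr),
\]
using only that $|D^2f|/|Df|\lesssim 1/\operatorname{dist}(B_j,\mathcal{C})$ from the critical-order hypothesis and that $|B_j|/\operatorname{dist}(B_j,\mathcal{C})\le 2\gamma_j(c)$ straight from the binding inequality. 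The cap $\gamma_j(c)\le 1/2$ built into the definition of $\gamma_j$ is precisely what forces $B_j$ to stay on one side of $\mathcal{C}$, so your anticipated side-of-$\tilde c$ case split never arises. Summability of $\gamma_j(c)$ then gives the uniform constant. Finally, the condition on $\delta$ involving $\sum\prod\zeta\gamma_{p_i}\le 1$ plays no role in this lemma; it is used only later, in Lemma~\ref{defintion reason}.
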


\begin{proof}
For any interval $B\in M$ and integer $n \geq 1$, let $B_j=f^j(B)$
for $j=0,... ,n$ and define the {\it generalized distortion}:
$$
D(f^n, B)=\prod_{j=0}^{n-1}\sup_{x_j, y_j \in
B_j}\frac{|Df(x_j)|}{|Df(y_j)|}.
$$
This definition and the mean value theorem imply
$$\frac{Df(x_j)}{Df(y_j)}=1+\frac{Df(x_j)-Df(y_j)}{Df(y_j)}=1+\frac{Df^2(\xi_j)}{Df(y_j)}(x_j-y_j)$$
where $\xi_j\in (x_j, y_j)$, and
\begin{equation}\label{generalized distoration}
D(f^n, B)\leq \prod_{j=0}^{n-1}(1+ \frac{\sup_{B_j}
|D^2f|}{\inf_{B_j}|Df|} |B_j|).
\end{equation}

We start by considering the points in $\bigtriangleup$, for any
$x\in I_p \cap \bigtriangleup$, $c$ is the closest critical point
to $x$.

{\bf Claim:} There exists a positive constant $K$ independent of
$x$ such that for any $1\leq k\leq p(x)-1$, then
$$D(f^k, [f(x),f(c)]) \leq  K.$$

Indeed,  put $B_0=[x,c]$ and $B_j=f^j(B_0)$, and denote
$d({B_j})=dist(B_j, \mathcal{C})$, from the definition of binding
period, and $\gamma_j(c)<\frac{1}{2}$ for every $1\leq j\leq
p(x)-1$, we have
 \begin{equation}\label{in the neighborhood}
 \begin{split}
 \frac{|{B_j}|}{d({B_j})}
  &\leq
\frac{|f^j(x)-f^j(c)|}{|f^j(c)-\mathcal{C}|-|f^j(c)-f^j(x)|} \\
&\leq
\frac{\gamma_j(c)|f^j(c)-\mathcal{C}|}{(1-\gamma_j(c))|f^j(c)-\mathcal{C}|}=\frac{\gamma_j(c)}{1-r_j(c)}\leq
2\gamma_j(c).
\end{split}
\end{equation}
By the orders of the critical points, we obtain that
$$
\sup_{x_j, y_j \in {B}_j}
\frac{|D^2f(x_j)|}{|Df(y_j)|}=\frac{\sup_{{B}_j}|D^2f|}{\inf_{{B}_j}|Df|}
\leq \frac{{K_l}^2}{d{({B}_j)}},
$$
where $K_l$ is a constant from the orders of the critical point.
Combining (\ref{generalized distoration}) and (\ref{in the
neighborhood}) we have
 \begin{equation*}
 \begin{split}
D(f^k, {B}_1)&\leq \prod_{j=1}^{k-1}(1+\sup_{x_j, y_j \in
{B}_j} \frac{|D^2f(x_j)|}{|Df(y_j)|}|B_j|)  \\
&\leq \prod_{j=1}^{k-1}(1+ {K_l}^2\frac{|{B}_j|}{d({B}_j)}) \leq
\prod_{j=1}^{k}(1+2{K_l}^2\gamma_j(c)).
\end{split}
\end{equation*}
Using the inequality $\ln(1+x)\leq x$, and the  summability of
$\gamma_j(c)$, we obtain  the uniform bound of the general
distortion $D(f^k, {B}_1).$ The Claim follows.

Using the above Claim, we have
\begin{equation*}
\begin{split}
|Df^p(x)|&= |Df^{p-1}(f(x))||Df(x)| \geq \frac{K}{K_l}
|Df^{p-1}(f(c))|{|x-c|}^{l(c)-1} \\
&\geq K {K_l}^{\frac{1}{l(c)}}
|Df^{p-1}(f(c))|{|f(x)-f(c)|}^{(l(c)-1)/l(c)}.
\end{split}
\end{equation*}
On the other hand, by the mean value theorem and the definition of
the binding period, we obtain
$$
K |Df^{p-1}(f(c))||f(x)-f(c)|\geq |f^p(x)-f^p(c)|\geq
\gamma_p(c)|f^p(c)-\mathcal{C}|.
$$
So it concludes
 that $|f(x)-f(c)|\geq
\frac{\gamma_p(c)|f^p(c)-\mathcal{C}|}{ K|Df^{p-1}(f(c))|}$.
Therefore,
$$|Df^p(x)|\geq K {K_l}^{\frac{1}{l(c)}}
{(\frac{\gamma_p(c)|f^p(c)-\mathcal{C}|}{
K|Df^{p-1}(f(c))|})}^{(l(c)-1)/l(c)}|Df^{p-1}(f(c))|.
$$
From the orders of the critical points, we can assume that
$|Df(f^p(c))|\leq K_l{|f^p(c)-\tilde{c}|}^{l(\tilde{c})-1}$, where
$\tilde{c}$ is the closest critical point to $f^n(c)$. Put
$K_1={KK_l}^{1/l(c)}$, we have

\begin{equation}
\begin{split}
|Df^p(x)|&\geq K_1
{\gamma_p(c)}^{\frac{l(c)-1}{l(c)}}{|Df^{p-1}(f(c))|}^{\frac{1}{l(c)}}
{|f^p(c)-\tilde{c}|}^{\frac{l(\tilde{c})-1}{l(c)}}
{|f^p(c)-\tilde{c}|}^{\frac{l(c)-l(\tilde{c})}{l(c)}}\\
&\geq K_1
{\gamma_p(c)}^{\frac{l(c)-1}{l(c)}}{|Df^{p}(f(c))|}^{\frac{1}{l(c)}}
{|f^p(c)-\tilde{c}|}^{\frac{l(c)-l(\tilde{c})}{l(c)}}=K_1
\gamma^{-1}_p(c).
\end{split}
\end{equation}
The proof of Lemma is complete.
\end{proof}

\section{The construction of the induce map}
Our aim in this Section is to construct a countable partition
$\hat{P}$ of an interval $J$ into open intervals, define an
inducing time function $\tau: \hat{P} \rightarrow N$ which is
constant on elements of $\hat{P}$, and let $\hat{F}
:\hat{P}\rightarrow M$, denote the induce map by
$$
\hat{F}(x)=f^{\tau(x)}(x).
$$
We will show this induce map has uniformly bounded distortion and
its image has bounded below on each element of $\hat{P}$. We will
give the corresponding estimates about the induce time in the last
Subsection. This construction is essentially indented to that of
\cite{BLS}, we will use the estimates without proof if these
estimates will not effect by the difference of the critical orders
from two sides. More precisely, it is the following,

\begin{prop}
Let $f\in \mathcal{A}_1$, then there exists $\delta'>0$ such that
for all $\delta''>0$ the following properties hold. For an
arbitrary interval $J\subset X$ with  $|J|\geq \delta''$, there
exists a countable partition $\hat{P}$ of $J$ and an induced time
function $\hat{p}:\hat{P}\rightarrow N$ constant on each element
$w$ of $\hat{P}$, such that the induced map
$\hat{F}=f^{\hat{p}(x)}(x)$ has  uniformly bounded distortion and
$|\hat{F}(w)|=|f^{\hat{p}(w)}(w)| \geq \delta' $. Moreover, it
satisfies the following estimates.

(1)(Summability induced time)
$$
\sum_n|\{\hat{p}>n|J\}|<\infty.
$$

(2)(Polynomial inducing time)
 If $d_n(c)<Cn^{-\alpha}$, $C>0$, $\alpha>1$ for all $c\in \mathcal{C}_c$ and $n\geq 1$, then
 there exists $\hat{C}>0$ such that
 $$
 |\{\hat{p}>n|J\}|<\hat{C}n^{-\alpha}.
 $$

(3)(Stretched exponential case) If
$\gamma_n(c)<Ce^{-\beta{n}^{\alpha}}$, $C>0$, $\alpha\in (0,1)$,
$\beta>0$ for all $c\in \mathcal{C}_c$  and $n\geq 1$, then for
each $\hat{\alpha}\in (0, \alpha)$, there exist $\hat{\beta},
\hat{C}>0$ such that
$$
|\{\hat{p}>n|J\}|<\hat{C}e^{-\hat{\beta}{n}^{\hat{\alpha}}}.
$$

(4)(Exponential case) If $\gamma_n(c)<Ce^{-\beta{n}}$, $C>0$,
$\beta>0$ for all $c\in \mathcal{C}_c$ and $n\geq 1$, then there
exist $\hat{\beta}, \hat{C}>0$ such that
$$
|\{\hat{p}>n|J\}|<\hat{C}e^{-\hat{\beta}{n}}.
$$
\end{prop}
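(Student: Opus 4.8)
The plan is to follow the inductive construction of \cite{BLS} as closely as possible, inserting the modifications forced by the recurrence-dependent definition of the binding period. I would build $\hat{P}$ by an iterative chopping procedure starting from $J$. Since $f$ has no wandering interval and BBC holds (Lemma \ref{bbc}), together with uniform expansion outside $\bigtriangleup$ (Lemma \ref{Uniformly expanding}), there is an integer $N=N_J$ after which some returning branch of $f^N|J$ covers a component of $\bigtriangleup$ or attains a definite size. At each stage one cuts the current interval into the pull-backs of the components of $\bigtriangleup$ around the points of $\mathcal{C}_c$ together with a uniformly bounded number of ``escaping'' pieces; on a piece that has entered $\bigtriangleup$ near $c$ one runs the orbit for its binding period $p(x)$ before resuming, while a piece whose image has already reached size $\geq \delta'$ with good distortion is frozen as an element of $\hat{P}$ with its accumulated time as $\hat{p}$. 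The structural input that makes the scheme work is Lemma \ref{expanding}: on a binding block attached to $c$ the map $f^{p}$ expands by at least $K_1\gamma_p(c)^{-1}$, so that, combined with the distortion Claim proved inside that Lemma and the Koebe principle (available from the negative Schwarzian derivative), the images of the pieces never shrink and bounded distortion is preserved when free (uniformly expanding) blocks are composed with binding blocks.

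For the bounded distortion and the lower bound $|\hat{F}(w)|\geq \delta'$ I would argue exactly as in \cite{BLS}. The orbit of $w$ up to time $\hat{p}(w)$ splits into an alternation of free segments, on which $f$ is uniformly expanding and nearly affine because the pieces are small and avoid $\bigtriangleup$, and binding segments, on which the distortion is controlled by the Claim in Lemma \ref{expanding} using the summability of $\gamma_n(c)$; the negative Schwarzian derivative supplies the Koebe space needed to pull back a definite-size target interval with bounded distortion, giving a uniform $\delta'$ depending only on $\delta$ and on $f$ (in particular not on $\delta''$, which only enters through the finite prefix $N_J$). None of this is sensitive to the two one-sided orders $l(c\pm)$ being unequal, so here I would merely indicate the bookkeeping and refer to \cite{BLS} for the routine estimates.

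The heart of the argument is the tail estimate (1)--(4). The inducing time decomposes as $\hat{p}(x)=\sum(\text{free times})+\sum(\text{binding times }p_i)$, where each free time is comparable, via the uniform expansion outside $\bigtriangleup$, to the logarithm of the expansion accumulated on that segment, and the $p_i$ are the successive binding periods. Grouping the elements of $\hat{P}$ according to their sequence of binding periods $(p_1,\dots,p_s)$ and the critical points visited, bounded distortion bounds $|\{\hat{p}>n|J\}|$ by a sum over such sequences of products $\prod_i \zeta\,\gamma_{p_i}(c_i)$ times the measure contributed by the free expansion. With $\delta$ fixed so that item (2) in the paragraph after Lemma \ref{small} holds, the total over all sequences with $p_i\geq p_\delta$ is at most $1$, which yields the summability (1). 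For the quantitative rates one partitions $n$ into the time spent binding and the time spent free: the free portion is killed by the exponential expansion, contributing a factor exponentially small in that portion; a binding portion of total length of order $n$ forces a sum of $\gamma_p(c)$ --- or, after absorbing the adjacent free return, of $d_p(c)$, which is legitimate because $d_n(c)\leq\gamma_{n-1}(c)$ --- over $p$ of order $n$. Inserting $d_n(c)\leq Cn^{-\alpha}$ with $\alpha>1$ then gives (2) by the standard polynomial convolution estimate, while $\gamma_n(c)\leq Ce^{-\beta n^{\alpha}}$ and $\gamma_n(c)\leq Ce^{-\beta n}$ give (3) and (4) by the corresponding stretched-exponential and exponential convolution bounds. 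The step I expect to be the main obstacle is precisely this last bookkeeping carried out uniformly over all $c\in\mathcal{C}_c$ when the orders differ from the two sides: one must check that the passage from the $\gamma_n$-weights produced by a binding block to the $d_n$-weights produced by a binding block followed by one free return is valid simultaneously for every branch, and it is exactly here --- through the recurrence term built into $\gamma_n(c)$ and hence into the binding period --- that the construction genuinely departs from \cite{BLS}.
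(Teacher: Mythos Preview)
Your construction of $\hat{P}$ and the bounded distortion/large image argument via Koebe are essentially the paper's own (the paper obtains distortion from the $\epsilon$-margins cut off at each large-scale return, so the Koebe space is manufactured by the algorithm rather than inherited, but this is a minor difference). The gap is in the tail estimates.

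Your proposed mechanism for (1)--(4) is to bound $|\{\hat p>n\}|$ by $\sum_{(p_1,\dots,p_s)}\prod_i\zeta\gamma_{p_i}$ and then feed in the assumed decay of $\gamma_p$ or $d_p$ via a ``convolution estimate''. This does not work in the polynomial (or even the stretched-exponential) regime: if $\gamma_p\lesssim p^{-\alpha}$ then $\prod_i\gamma_{p_i}$ carries no decay in $\sum_ip_i$ when all $p_i$ are near $p_\delta$, and the combinatorial sum over itineraries does not supply it. Likewise, invoking condition (2) after Lemma~\ref{small} only shows that the total mass of non-stopped points is at most $1$; it does not give $\sum_n|\{\hat p>n\}|<\infty$. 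Your remark that ``absorbing the adjacent free return'' turns a $\gamma_p$-weight into a $d_p$-weight also misreads $d_n(c)$: in the paper $d_n(c)$ is not an expansion factor but an upper bound for the \emph{length} of the set $\{x:p(x)\geq n\}$ near $c$, and this is precisely what makes the argument go through.

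What the paper actually does for the $\hat P_n''$ part (total binding time $>\eta n$) is a pigeonhole step: given an itinerary $(p_1,\dots,p_s)$, pick the first index $j'$ with $p_{j'}\geq \eta n/(2{j'}^2)$, which exists because $\sum_ip_i>\eta n$. The interval $\tilde w$ determined by $(p_1,\dots,p_{j'})$ has image under $f^{\nu_{j'}}$ contained in $\{p(\cdot)\geq p_{j'}\}$, whose length is at most $2KK_l\,d_{p_{j'}}(c)$; this is where $d_n(c)$ enters and where the recurrence term in $\gamma_n(c)$ is genuinely used. Pulling back by the earlier returns via Lemma~\ref{important} and summing over the remaining $p_i$ with Lemma~\ref{small} produces a bound $C_1\max_c\sum_{s}2^{-s}\hat d_{n,s}(c)$ with $\hat d_{n,s}(c)=d_{[\eta n/2s^2]}(c)$. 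The polynomial, stretched-exponential and exponential rates then drop out by inserting the assumed bound on $d_n$ or $\gamma_n$ into this single sum, not by any convolution. Without this extraction of one large $p_{j'}$ your scheme cannot recover (1) or (2).
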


At first, we clarify the role of the constant and some notations
in the above Proposition. Let $\bigtriangleup_1=\cup_{c\in
\mathcal{C}_c}(c-\frac{\delta}{2}, c+\frac{\delta}{2})$, using a
result in \cite{cd}, i.e., for any Borel set $A$, there exists
$C>0$ such that $|f^{-n}(A)| \leq C {|A|}^{\frac{1}{l_{\max}}}$,
then there exists $\delta'>0$ such that for each component $w\in
\bigtriangleup_1\backslash \mathcal{C}_c$ and each $n\geq 0$,
$|f^n(w)|\geq \delta'$ \footnote{In fact, it suffices to choose
$\delta'$ so that $f^i(w)\geq \delta'$ for each $w\in
\bigtriangleup_1\backslash \mathcal{C}_c$ and $0\leq i \leq
n=\min\{k; Int(f^i(w))\cap \mathcal{C} \neq \emptyset\}$, where
$Int(f^i(w)$ is the interior point of $f^i(w)$.}. We also suppose
$\delta'\leq \frac{\delta}{2}$, where $\delta$ is a constant we
have fixed in previous Section. $\delta''$ is a constant to be
fixed in next section. We denote $|\{\hat{p}>n|J\}|$ by the
conditional probability
$$
\frac{|\{x\in J; \hat{p}(x)>n\}|}{|J|}.
$$

\subsection{The construction of the induced map $\hat{F}$}

Denote $\bigtriangleup_1=\cup_{c\in C_c}(c-\frac{\delta}{2},
c+\frac{\delta}{2})$ as above. For any interval $J$ with $|J|\geq
\delta''$, we subdivide $J$ by the $I_p, p\geq 0$. For each
subinterval $w=I_p\cap J$, let
$$
\nu_1=\min\{n\geq 0; \ f^n(w)\cap \bigtriangleup_1\neq
\emptyset\},
$$
be the first visit of $w$ to $\bigtriangleup_1$, denote
$\tilde{w}=f^{\nu_1}(w)$, we distinguish two cases:

(1). $|\tilde{w}|<\delta'$,  we subdivide $\tilde{w}$ with the
elements $\{I_p\}$. Each interval $I_p\cap \tilde{w}$ for $p>0$ is
labeled as deep return. Notice that by the choice of $\delta'$,
each return in this case is a deep return.

(2). $|\tilde{w}|\geq \delta'$,  we cut off two side intervals of
length $\epsilon$ from $\tilde{w}$, where $\epsilon$ is a small
parameter to be fixed (see Lemma \ref{defintion reason}). The
middle part is called the large scale, add the sub interval
$w_0\subset w$ to the partition that $f^{\nu_1}(w_0)$ is equals to
this middle part of $\tilde{w}$, and stop to work on this middle
part. Suppose that $\tilde{w}\pm$ are the two pieces of length
$\epsilon$ that are cut off, we subdivide $\tilde{w}\pm$ further
by the elements in $\{I_p\}$. Each interval $I_p\cap \tilde{w}\pm$
for $p>0$ is labeled as deep return, $I_0\cap \tilde{w}\pm$ is
labeled as shallow return.

Let $w'$ be a partition interval by $\{I_p\}$ in $\tilde{w}$ which
has not reached the large scale, denote the binding period by
$p(w')$, let
$$
\nu_2 = \nu_1 +\min\{n\geq p(w');\  f^n(w')\cap
\bigtriangleup_1\neq \emptyset \}.
$$
Then we subdivide  $f^{\nu_2-\nu_1}(w')$ according above rules,
stop until some parts reach the large scale. Notice that we have
applied binding period (if exist) time iteration to guarantee
expansion after each return time if it has not reach large scale.

We then construct inductively a sequence of partitions $\hat{P}_n$
by only considering at most $n$ iterates, denote $\hat{P}$ by the
partition of $J$ by considering all iterates of $f$.

Let $x\in w$ where the above algorithm eventually stops, then
there exists $n>0$ such that $x\in \hat{P}_n$, denote the stopping
time by $\hat{P}_J(x)=n$, otherwise, set  $\hat{P}_J(x)=\infty$.

Let $\hat{J}= \{x\in J; \hat{P}_J(x) < \infty \}$, we have defined
the induced map
$$
\hat{F}_J: \hat{J}\rightarrow \ M , \
\hat{F}_J(x)=f^{\hat{P}_J(x)}(x).
$$

In next subsection we will show $\hat{J}$ is a partition of $J$ up
to a set of zero Lebesgue measure, and give the corresponding
estimates of $\hat{F}_J$, but now we first want to know the
structure of $\hat{J}$. Given an interval $w$ in some partition
$\hat{P}_n, n>0$, we associated it with  a sequence
$$
(\nu_1, p_1),\ (\nu_2, p_2), \dots,\ (\nu_s, p_s),
$$
where $\nu_i,1\leq i \leq s$ is the return times to
$\bigtriangleup_1$, $p_i$ is the corresponding periods, and $s$ is
the maximum integer  such that $\nu_s\leq n$. If $w$ is an
interval on which $(\nu_1, p_1),\ (\nu_2, p_2), \dots,\
(\nu_{j-1}, p_{j-1}),$ is fixed and $\nu_j$ is the next return,
then $\{x\in w; p(f^{\nu_j}x)=p\}$ has at most $4$ components.
This maximum is attained when $|f^{\nu_j}(w)|\geq \delta$, and the
outmost intervals of length of $\epsilon$ contain a critical point
and $p_j$ is big enough.

Notice that the corresponding sequence $(\nu_1, p_1),\ (\nu_2,
p_2), \dots,\ (\nu_s, p_s)$ contains information  about the
iterations of $f$ on $w$, although many sequences  don't
correspond to partition intervals. For a given sequence  $(\nu_1,
p_1),\ (\nu_2, p_2), \dots,\ (\nu_s, p_s)$, let
$$
S_d:=\{i\leq s; \nu_i\  is \ a \ deep \ return\}=\{i\leq s; \
p_i>0\},
$$
$$
S_s:=\{i\leq s; \nu_i\  is \ a \ shallow\ return\}=\{i\leq s; \
p_i=0\},
$$
$$
S_{s,s}:=\{i<s; \ p_i=0\  and \ p_{i+1}=0\}.
$$

\subsection{Important estimates}

\begin{lemma}\label{bound distortion 1}
There exists $K_{\epsilon}>0$ such that for all $w\in \hat{P}$,
the distortion of $\hat{F}_J|_w$ is bounded by $K_{\epsilon}$.
\end{lemma}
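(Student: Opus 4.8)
The plan is to decompose the orbit of an arbitrary partition element $w\in\hat{P}$, say of inducing time $\hat{P}_J(w)=N$, into two alternating types of stretches and estimate the generalized distortion $D(f^N,w)$ on each. Along the associated sequence $(\nu_1,p_1),\dots,(\nu_s,p_s)$ of returns to $\bigtriangleup_1$ with their binding periods, I would split $[0,N)$ into (a) the binding pieces $[\nu_i,\nu_i+p_i)$ following each deep return, and (b) the free pieces, during which $f^j(w)$ stays outside $\bigtriangleup_\delta$ (equivalently outside the relevant critical neighborhood), running from the end of one binding period up to the next return time $\nu_{i+1}$. Using the multiplicative formula $D(f^N,w)=\prod D(f^{\cdot},\cdot)$ over these consecutive pieces (as in the Claim inside the proof of Lemma \ref{expanding}), it suffices to bound the product of the distortions over all pieces.

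For the binding pieces I would reuse verbatim the Claim proved inside Lemma \ref{expanding}: for a deep return of the $i$-th type, the binding orbit $[f(f^{\nu_i}x),f(c_i)]$ satisfies $|B_j|/d(B_j)\le 2\gamma_j(c_i)$, whence $D(f^{p_i-1},B_1)\le\prod_j(1+2K_l^2\gamma_j(c_i))$; taking logarithms and using $\ln(1+x)\le x$ together with the summability $\sum_n\gamma_n(c)<\infty$ gives a bound $e^{2K_l^2\sum_n\gamma_n(c_i)}$ on each binding piece, independent of $i$, and the number of critical points is finite so this is uniform. For the free pieces I would use the Koebe/bounded-distortion principle that comes with negative Schwarzian derivative together with Lemma \ref{Uniformly expanding}: on a maximal free run the iterates $f^j(w)$ stay a definite distance $\delta/2$ (the size of $\bigtriangleup\setminus\bigtriangleup_1$) away from $\mathcal{C}$, so by Lemma \ref{Uniformly expanding} the map is uniformly expanding there, the intervals $f^j(w)$ shrink geometrically backward from their (uniformly bounded) terminal size, and the standard bounded-distortion estimate for maps with negative Schwarzian on intervals with definite space around them yields a distortion bound $K_\epsilon'$ for that free piece that is summable — in fact the sum over all free pieces of $(\text{length of }f^j(w))$ is dominated by a geometric series with ratio $e^{-\lambda_\delta}$, giving a uniform bound on the product over all free pieces.

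Multiplying the two uniform bounds — the contribution from binding pieces and the contribution from free pieces — gives $D(f^N,w)\le K_\epsilon$ with $K_\epsilon$ depending only on $f$, $\delta$ and the cut-off parameter $\epsilon$, as claimed; one passes from $D(f^N,w)$ to the genuine distortion $\sup_{x,y\in w}|Df^N(x)|/|Df^N(y)|$ automatically since the latter is bounded by the former. The main obstacle is the treatment of the \emph{free} pieces: one must be careful that at a return which reaches the large scale (case (2) of the construction), or at a shallow return, the image $f^{\nu_i}(w)$ has size at most $\delta$ but is \emph{not} necessarily small, so the ``definite space on both sides'' needed for the Koebe-type estimate must be supplied by the geometry of $\bigtriangleup_1\subset\bigtriangleup_\delta$ (the factor-$2$ gap between $\delta/2$ and $\delta$) rather than by smallness of the image; checking that the cut-off $\epsilon$ and the constants $\delta',\delta$ fixed in Section 3 are compatible with this, uniformly over the combinatorics encoded by $S_d,S_s,S_{s,s}$, is the delicate point. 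This is exactly where the argument parallels \cite{BLS}, and since the difference of one-sided critical orders affects only the binding-piece estimate (already handled via $\gamma_n(c)$ above) and not the free-piece estimate, the proof goes through with the same constants.
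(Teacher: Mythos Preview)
Your approach differs from the paper's and, as written, has a genuine gap. The paper's proof is a direct two-line application of the Koebe principle: by construction, when $w\in\hat P$ stops at time $n=\hat p(w)$, its image $f^n(w)$ is the ``middle part'' obtained after cutting two side intervals of length $\epsilon$ from an interval $\tilde w$ of size $\ge\delta'$; hence there is $T\supset w$ on which $f^n$ is a diffeomorphism and $f^n(T)$ is an $\epsilon$-scaled neighbourhood of $f^n(w)$. The negative-Schwarzian Koebe principle then bounds the distortion of $f^n|_w$ by a constant $K_\epsilon$ depending only on the space ratio, with no decomposition into binding and free pieces needed.

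The gap in your argument is the passage from per-piece bounds to a bound on the product over all pieces. Your binding estimate (via the Claim in Lemma~\ref{expanding}) gives a uniform constant $K_b\le\exp\bigl(2K_l^2\sum_n\gamma_n(c)\bigr)$ for the distortion over a \emph{single} binding stretch, and likewise each free stretch contributes a bounded factor. But the number of returns $s$ before $w$ reaches large scale is unbounded over $\hat P$, so the total product is of order $K_b^{\#S_d}$ times a similar free-piece factor raised to $s$, which is not uniform. Your sentence ``the sum over all free pieces \dots\ is dominated by a geometric series'' conflates the geometric decay \emph{within} one free run with the total over many runs: each free run terminates at a return where the image has size up to $\delta$, so the sum of $|f^j(w)|/\operatorname{dist}(f^j(w),\mathcal C)$ over that run contributes a fixed positive amount, and there are $s$ such runs. (A secondary issue: the Claim in Lemma~\ref{expanding} bounds $D(f^{k},[f(x),f(c)])$, i.e.\ it starts \emph{after} the first critical iterate, so the distortion of the very first step at each deep return---where $|Df|\approx|x-c|^{l(c)-1}$---still needs separate treatment.) The Koebe space supplied by the $\epsilon$-cutoffs at the terminal time is precisely what makes all of this bookkeeping unnecessary.
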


\begin{proof}
Let $w\in \hat{P}_n$, then $f^n(w)$ has reached large scale. By
the construction of the induced map, there is an interval
$T\supset w$ such that $f^n(T)$ is a $\epsilon$-scaled
neighborhood of $f^n(w)$, i.e.,$|f^n(T)\setminus f^n(w)|\geq
\epsilon |f^n(w)|$. On the other hand, it is easy to see that
$f^n$ is a diffeomorphism on $T$. Using the Koebe principle, we
can obtain the result.
\end{proof}

For a given sequence $(\nu_1, p_1),\ (\nu_2, p_2), \dots,\ (\nu_s,
p_s)$, the following Lemma contains an important metric estimation
of the length of the corresponding interval.

\begin{lemma}\label{important}
Let $C=C({\frac{\delta}{2}})$ and $\lambda=\lambda_{\delta/2}$ be
as in Lemma \ref{Uniformly expanding}, and $K$ be constant in
Lemma \ref{bbc}. There exist $K_0>0$ independent of $\epsilon$ and
$\rho\in (0,1)$ with the following properties. For a given
sequence $(\nu_1, p_1),\ (\nu_2, p_2) \dots,\ (\nu_s, p_s)$ with
$\nu_s\leq n$, the corresponding interval $w_{p_1p_2,\dots,p_s}\in
\hat{P}_n$, we have
$$
\frac{|w_{p_1p_2,\dots,p_s}|}{|f^m(w_{p_1p_2,\dots,p_s})|} \leq \
\min \Big{\{} C^{-\sharp S_d}e^{-\lambda (m-\sum_{i=0}^{s}p_i)}, \
{(\frac{K_0}{K})}^{\sharp S_d} {\rho}^{\sharp S_{s,s}} \Big{\}}
\prod_{i\in S_d}{(DF_{p_i})}^{-1}
$$
for $m=\max\{n, \nu_s+p_s\}$. And $\rho\to 0$ as $\epsilon \to 0$.
Moreover there exists $T>0$ which can be chosen arbitrarily large
if $\epsilon$ is small, such that $\nu_{i+1}-\nu_{i}\geq T$
whenever $p_i=p_{i+1}=0$.
\end{lemma}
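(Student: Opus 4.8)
Lemma~\ref{important} asserts a metric estimate for partition intervals $w_{p_1\dots p_s}\in\hat P_n$: the ratio $|w_{p_1\dots p_s}|/|f^m(w_{p_1\dots p_s})|$ is bounded by the minimum of a term controlled by the uniform expansion outside $\bigtriangleup$ (giving the factor $C^{-\sharp S_d}e^{-\lambda(m-\sum p_i)}$) and a term controlled by the BBC contraction at returns together with a geometric gain $\rho^{\sharp S_{s,s}}$ from the shallow-shallow returns; in both cases one pulls out the binding-period expansion $\prod_{i\in S_d}(DF_{p_i})^{-1}$. It also records that $\rho\to0$ and that consecutive shallow returns are separated by a time $T$ which is large when $\epsilon$ is small.

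**Plan of proof.** I would argue by induction on the number of returns $s$, decomposing the orbit segment of length $m$ into blocks delimited by the return times $\nu_1<\nu_2<\dots<\nu_s$. Fix a partition interval $w=w_{p_1\dots p_s}$ and write $w_j:=f^{\nu_j}(w)$, $w_j':=f^{\nu_j+p_j}(w)$ (the image just after the $j$-th binding period). The chain rule gives
\[
\frac{|w|}{|f^m(w)|}=\prod_{j}\frac{|f^{\nu_j}(w)|}{|f^{\nu_{j+1}}(w)|}
\]
up to the final incomplete block, so it suffices to control each factor. On the ``free'' part of a block (from $\nu_j+p_j$ up to the next return $\nu_{j+1}$), all iterates lie outside $\bigtriangleup_1$, so Lemma~\ref{Uniformly expanding} (applied with $\delta/2$) gives contraction by $C^{-1}e^{-\lambda(\nu_{j+1}-\nu_j-p_j)}$, and Lemma~\ref{bound distortion 1}-type Koebe distortion control lets me pass from a point estimate to a length ratio. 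On the ``return + binding'' part I split $Df^{\nu_{j+1}-\nu_j}$ at the return as $Df^{\text{(entry)}}\cdot Df^{p_j}(\text{at }f^{\nu_j}x)$: Lemma~\ref{bbc} gives $|Df^{(\text{entry})}|>K$ for the iterates that just re-entered $\bigtriangleup_\delta$, hence a factor $K^{-1}$, while Lemma~\ref{expanding} gives $|Df^{p_j}|\ge DF_{p_j}(c)\ge K_1\gamma_{p_j}^{-1}$, producing the $(DF_{p_i})^{-1}$ factors. Multiplying over $j$ yields the first bound $C^{-\sharp S_d}e^{-\lambda(m-\sum p_i)}\prod(DF_{p_i})^{-1}$; dropping the $e^{-\lambda(\cdot)}$ factor and instead using the $K_0/K$ bound per deep return (where $K_0$ comes from the worst-case expansion of a deep return into $\bigtriangleup_1$ while the interval has not yet reached large scale) gives the alternative $(K_0/K)^{\sharp S_d}$ bound.

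**The shallow-shallow gain and the separation $T$.** The factor $\rho^{\sharp S_{s,s}}$ and the separation statement are really the same point and I would treat them together, since this is exactly where the construction of $\hat P$ (cutting the large scale, cutting off the $\epsilon$-pieces) enters. When $\nu_i$ and $\nu_{i+1}$ are consecutive shallow returns, the interval $f^{\nu_i}(w)$ lies in a $\delta/2$-neighborhood of a critical point but is \emph{not} a deep return, so it sits in the outer $\epsilon$-layer $\tilde w\pm$; after one iterate it is mapped with a definite expansion (bounded derivative there, since $p_i=0$) and must travel outside $\bigtriangleup_1$ for at least the Ma\~né/uniform-expansion time needed to grow from scale $\sim\epsilon$ back to a scale where it can re-enter $\bigtriangleup_1$ — that forced travel time is the $T$, and since the required growth factor is $\sim\delta/\epsilon$ and expansion outside is at rate $\lambda$, one gets $T\gtrsim\lambda^{-1}\log(\delta/\epsilon)\to\infty$ as $\epsilon\to0$. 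The accumulated contraction over that forced excursion, namely $C^{-1}e^{-\lambda T}$, is what I call $\rho$, and it tends to $0$ with $\epsilon$. One must be slightly careful that this $\rho$-gain is \emph{not} already used up in the $e^{-\lambda(m-\sum p_i)}$ bound — but since the second bound in the $\min$ is precisely the version where we have thrown away the exponential and kept only the per-return losses $K_0/K$ plus the per-shallow-shallow gain $\rho$, there is no double counting.

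**Main obstacle.** The genuinely delicate point, and the reason this Lemma needs the modified binding period of this paper rather than that of \cite{BLS}, is controlling the return-block factor when the relevant critical point has \emph{different orders on its two sides}: when $f^{\nu_j}(w)$ straddles (or sits to one side of) $c$, the relation between $|f^{\nu_j}(w)|$, $|f^{\nu_j+p_j}(w)|$ and the derivative $Df^{p_j}$ must be expressed through $\gamma_{p_j}(c)$ with the correct one-sided exponent $l(c\pm)$, and the bound $DF_{p_j}(c)\ge K_1\gamma_{p_j}^{-1}$ from Lemma~\ref{expanding} is exactly what absorbs this asymmetry — so the proof here is to invoke Lemma~\ref{expanding} at each deep return rather than re-deriving the distortion estimate of \cite{BLS}. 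Everything else (the induction bookkeeping, the Koebe-distortion passage from point derivatives to length ratios, summing the exponents $m-\sum p_i$ correctly over incomplete final blocks) is routine and I would state it as such, citing \cite{BLS} for the parts unaffected by the one-sided orders.
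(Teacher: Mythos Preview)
Your outline is essentially correct and matches the argument of Lemma~3.2 in \cite{BLS}, which is all the paper invokes here: the paper's proof is the one-line reference ``See the proof of Lemma 3.2 in \cite{BLS}.'' Your decomposition into free/binding/entry blocks, the use of Lemma~\ref{Uniformly expanding} on the free part, Lemma~\ref{bbc} at each return, and the shallow--shallow analysis giving $T\gtrsim\lambda^{-1}\log(\delta/\epsilon)$ and hence $\rho\to 0$, are exactly the ingredients of that argument.

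One correction to your framing: the ``main obstacle'' you identify is not actually present in \emph{this} lemma. The statement of Lemma~\ref{important} uses $DF_{p_i}$ only as an abstract lower bound for $|Df^{p_i}|$ on $I_{p_i}$, and its proof never needs to relate $DF_{p_i}$ to $\gamma_{p_i}$ or to the one-sided critical orders --- the binding-period expansion is treated as a black box. The asymmetry in critical orders is entirely absorbed into the modified definition of the binding period and into Lemma~\ref{expanding}, and those are inputs to \emph{later} estimates (notably Lemma~\ref{defintion reason}), not to this one. This is precisely why the paper can cite \cite{BLS} verbatim here, and why Section~2 tells readers familiar with \cite{BLS} that they may skip Sections~4--5 except for the proof of Lemma~\ref{defintion reason}. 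So your sketch is fine, but you should not flag the one-sided orders as a difficulty specific to this lemma.
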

\begin{proof}
See the proof of Lemma 3.2 in \cite{BLS}.
\end{proof}

\iffalse

The following lemma is completely combinatorial and provide the
bound of possible associated sequence.
\begin{lemma}\label{conbinatoral}
Let $N_{k,s}$ be the number of integer sequence $(\nu_1, p_1),\
(\nu_2, p_2) \dots,\ (\nu_s, p_s)$ such that $p_1+p_2+\dots
+p_s=k$ and $p_i\geq 0$ for all $i$. Let ${N_{k,s}}^+$ be the same
number for sequence with $p_i>0$ for all $i$, then
$$
N_{k,s} \leq 2^s \max_{j\leq k}{N_{k,s}}^+ <2^{k+s-1}.
$$
Given $\zeta>0$ small and $\alpha\in (0,1]$, there exists
$\hat{\zeta}=\hat{\zeta}{(\zeta, \alpha)}$ with
$\hat{\zeta}\rightarrow 0$ as $\zeta \rightarrow 0$. then
\begin{equation}
{N_{k,s}}^+\leq
\begin{cases} e^{\hat{\zeta}k^{\alpha}\log k}   \ \ & \  if \
s\leq \zeta k^{\alpha},
\\
e^{\hat{\zeta}}  \ \ &  \ if \ s\leq \zeta k.
   \end{cases}
   \end{equation}
\end{lemma}
\begin{proof}
See \cite{BLS}.
\end{proof}

\fi

\subsection{Induced time estimates}

The aim of this subsection is to estimate the tail behaviors of
the induced time function $\hat{P}$, i.e., the estimation of
$\{x\in J; \hat{p}(x)>n \}$.

We fixed $n$ for the rest of this subsection. Let $\eta>0$ be a
small constant to be determined in Lemma \ref{time estimate}, we
can divide the elements in $\hat{P}_n$ with $\hat{p}|_w>n$ into
two parts
$$
\hat{P}_n'=\{w\in \hat{P}_n; \hat{p}|_w>n, \sum_{s=1}^sp_i\leq
\eta n \},
$$
$$
\hat{P}_n''=\{w\in \hat{P}_n; \hat{p}|_w>n, \sum_{s=1}^sp_i> \eta
n \}.
$$
Then we have
$$
|\{\hat{p}>n\}|=\sum_{w\in \hat{P}_n'}|w| + \sum_{w\in
\hat{P}_n''}|w|.
$$

To treat the exponential and stretched exponential cases, we
subdivide $\hat{P}_n''$ further into
$$
\hat{P}_{n-}''=\{w\in \hat{P}_n'', \ s\leq \rho
n^{\hat{\alpha}}\}, \ \ \hat{P}_{n+}''=\{w\in \hat{P}_n'', \ s>
\rho n^{\hat{\alpha}}\},
$$
where $\hat{\alpha}\in (0,1]$ and $\rho>0$ are constants to be
fixed below.

Intuitively, ${p_1, p_2,\dots,p_s}$ in $\hat{P}_n'$ is small, then
the elements in $\hat{P}_n'$ spend much time in $X \setminus
\bigtriangleup$. Thus we can use Lemma \ref{Uniformly expanding}
to get exponential rate of decay of $\hat{P}_n'$. For elements in
$\hat{P}_n''$, they spend much time in $\bigtriangleup$, then the
estimations of $\hat{P}_n''$ relate closely to the property of the
critical point. So we obtain the estimations of $\hat{P}_n''$ by
using Lemma \ref{bbc} and Lemma \ref{important} under the
assumption of the rates of decay of $\gamma_n(c)$.

\begin{lemma}\label{time estimate}
For any $\theta>0$ there exists $\eta_0>0$ such that for all
$0<\eta<\eta_0$ and for $n$ sufficiently large,
$$
\sum_{w\in \hat{P}_n'}|w| \leq e^{-(\lambda-\theta)n}.
$$
\end{lemma}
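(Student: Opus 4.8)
The plan is to estimate $\sum_{w\in\hat P_n'}|w|$ by summing the metric estimate of Lemma \ref{important} over all admissible sequences $(\nu_1,p_1),\dots,(\nu_s,p_s)$ with $\sum_i p_i\le\eta n$ that can occur for a stopping time $>n$. For a fixed such sequence, the corresponding partition interval $w=w_{p_1\dots p_s}$ satisfies, by the first branch of the minimum in Lemma \ref{important} and $|f^m(w)|\le|M|=1$,
$$
|w|\;\le\; C^{-\sharp S_d}\,e^{-\lambda(m-\sum_{i=0}^s p_i)}\prod_{i\in S_d}\bigl(DF_{p_i}(c)\bigr)^{-1},
$$
where $m=\max\{n,\nu_s+p_s\}\ge n$. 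Using $\sum_i p_i\le\eta n$ we get $m-\sum_i p_i\ge(1-\eta)n$, so $e^{-\lambda(m-\sum p_i)}\le e^{-\lambda(1-\eta)n}$, and this already supplies the bulk of the decay rate. The remaining factor $C^{-\sharp S_d}\prod_{i\in S_d}(DF_{p_i}(c))^{-1}$ must be controlled after summing over sequences.

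First I would handle the combinatorics: since each interval of $\hat P_n$ with a prescribed sequence of returns and periods has at most $4$ components (as noted after the construction of $\hat P_J$), and since the number of ways to choose the return times $\nu_1<\dots<\nu_s\le n$ is at most $\binom{n}{s}\le 2^n$, while the periods $p_i$ range over sequences with $\sum p_i\le\eta n$, the total count is crudely bounded by $4^s 2^n\cdot(\text{number of period vectors})$. The key is that $\prod_{i\in S_d}(DF_{p_i}(c))^{-1}\le\prod_{i\in S_d}K_1^{-1}\gamma_{p_i}(c)$ by Lemma \ref{expanding}, and then the normalization condition (2) fixed at the end of Section 3, namely $\sum_{s\ge1}\sum_{(p_1,\dots,p_s),\,p_i\ge p_\delta}\prod_{p_i}\zeta\gamma_{p_i}(c)\le1$, lets me sum the deep-return contributions against a geometric series in the number of deep returns, absorbing the factors $C^{-\sharp S_d}$ and $K_1^{-\sharp S_d}$ and the $4^s$ into the constant $\zeta$ (this is exactly the role $\zeta$ was reserved for). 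The shallow returns contribute only the factor accounting for their position, handled by the $\rho^{\sharp S_{s,s}}$ bound of Lemma \ref{important} together with the spacing $\nu_{i+1}-\nu_i\ge T$; choosing $\epsilon$ small makes $T$ large and $\rho$ small, so the shallow-return sum is dominated by a convergent geometric series as well.

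Putting these together, $\sum_{w\in\hat P_n'}|w|\le e^{-\lambda(1-\eta)n}\cdot 2^n\cdot(\text{bounded})$; the stray factor $2^n$ from the choice of return times is the one genuinely delicate point. The standard fix, which I would follow, is not to bound the number of return-time configurations by $2^n$ but to observe that in $\hat P_n'$ the total time spent inside $\bigtriangleup_1$ (the sum of binding periods plus a bounded amount per return) is at most a small multiple of $n$ once $\eta$ is small and $\delta$ is small (so $p_\delta$ is large, forcing $s\le\sum p_i/p_\delta\le\eta n/p_\delta$ to be a small fraction of $n$); hence the number of admissible return-time patterns is at most $\binom{n}{s}\le e^{H(\eta/p_\delta)n}$ with $H$ the binary entropy, which is $e^{o(n)}$ as $\eta\to0$. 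Thus for any $\theta>0$ I first pick $\delta$ (hence $p_\delta$) and then $\eta_0$ so that $\lambda\eta+H(\eta/p_\delta)<\theta$ for $\eta<\eta_0$, giving $\sum_{w\in\hat P_n'}|w|\le e^{-(\lambda-\theta)n}$ for $n$ large. The main obstacle is precisely this entropy bookkeeping — making sure the number of combinatorial configurations of returns is subexponential with a rate that can be made smaller than any prescribed $\theta$ by shrinking $\eta$ — since the metric decay $e^{-\lambda(1-\eta)n}$ and the summability of $\gamma_{p_i}(c)$ are already in hand from Lemma \ref{important}, Lemma \ref{expanding}, and the normalization (2).
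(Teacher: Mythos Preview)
Your approach matches what the paper defers to (Lemma~3.5 of \cite{BLS}): extract $e^{-\lambda(1-\eta)n}$ from the first bound in Lemma~\ref{important} and show the remaining sum over admissible itineraries grows at most like $e^{\theta n}$ for any prescribed $\theta$. Two corrections, however. First, the return times $\nu_1<\dots<\nu_s$ are not free parameters: once the branch at each return is chosen, the next return time is determined by the dynamics, so the $\binom{n}{s}$ factor you introduce is unnecessary and the relevant multiplicity is only the $4^s$ coming from the branching at each return. Second, your inequality $s\le\sum_i p_i/p_\delta\le\eta n/p_\delta$ bounds only $\#S_d$, not $s$ itself, since shallow returns have $p_i=0$. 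The number of shallow returns is controlled separately: by the last clause of Lemma~\ref{important}, consecutive shallow returns are separated by at least time $T$, so each maximal run of shallow returns of length $k$ occupies time $\ge(k-1)T$, which gives $\#S_s\le \#S_d + n/T + 1$. Since $T$ can be taken arbitrarily large by shrinking $\epsilon$, the factor $4^s$ is then $\le e^{\theta' n}$ with $\theta'$ as small as desired. With these two adjustments your argument goes through.
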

\begin{proof}
See the proof of Lemma 3.5 in \cite{BLS}.
\end{proof}

\begin{lemma}\label{defintion reason}
Fix $L\in \{1,\dots, n\}$ arbitrary and let
$$
\hat{d}_{n,s}(c) := d_i(c) \ for \ i= \max\{[\frac{\eta n}{2s^2}],
L\}.
$$
Write $s(w)=s$ if the associate sequence $(\nu_1, p_1),\ (\nu_2,
p_2), \dots,\ (\nu_s, p_s)$ of $w$ has length $s$. For any
$\eta>0$ there exists $C_1>0$ such that
$$
\sum_{w\in \hat{P}_n'', s(w)\geq L}|w|\leq C_1 \max_{c\in
\mathcal{C}_c} \sum_{s=L}^n 2^{-s} \hat{d}_{n,s}(c).
$$
\end{lemma}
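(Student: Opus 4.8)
The plan is to estimate the sum $\sum_{w \in \hat P_n'', s(w)\geq L} |w|$ by grouping the partition intervals $w$ according to their associated combinatorial data. Recall that each $w \in \hat P_n''$ carries a sequence $(\nu_1,p_1),\dots,(\nu_s,p_s)$ with $\sum_i p_i > \eta n$, and that Lemma~\ref{important} bounds $|w|$ in terms of $|f^m(w)|$, the number $\sharp S_d$ of deep returns, $\sharp S_{s,s}$, and the product $\prod_{i\in S_d}(DF_{p_i})^{-1}$. Since $f^m(w)$ reaches the large scale, $|f^m(w)|$ is bounded below by a fixed constant, so $|w| \lesssim ({K_0}/{K})^{\sharp S_d} \rho^{\sharp S_{s,s}} \prod_{i\in S_d}(DF_{p_i})^{-1}$; and by Lemma~\ref{expanding}, $(DF_{p_i})^{-1} \leq K_1^{-1}\gamma_{p_i}(c_i)$ where $c_i$ is the critical point bound to the $i$-th return. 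The strategy is: first fix the length $s$ and the tuple $(p_1,\dots,p_s)$; sum over all admissible intervals sharing this data (there are boundedly many per choice of return times $\nu_i$, and the $\nu_i$ are controlled because the large scale is reached by time $n$); then sum over the $p_i$'s subject to $\sum p_i > \eta n$; and finally sum over $s$ from $L$ to $n$.

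The key step is the passage from $\prod_i \gamma_{p_i}(c)$ summed over tuples with $\sum p_i > \eta n$ to the quantity $\hat d_{n,s}(c) = d_i(c)$ with $i = \max\{[\eta n/(2s^2)], L\}$. The point is that if $\sum_{i=1}^s p_i > \eta n$ with $s$ returns, then by pigeonhole at least one index $j$ has $p_j \geq \eta n / s$; more refined bookkeeping (as in \cite{BLS}) extracts a factor $d_i(c)$ from the product using the definition $d_n(c) = \min_{i<n}(\gamma_i(c)/|Df^i(f(c))|)^{1/l(c)}|f^i(c)-\tilde c|^{l(\tilde c)/l(c)}$, which already incorporates the derivative growth picked up between consecutive returns via Lemma~\ref{Uniformly expanding} and Lemma~\ref{bbc}. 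Concretely, one splits each inter-return block into its binding part (handled by Lemma~\ref{expanding}, contributing $\gamma_{p_i}$) and its free part (handled by uniform expansion outside $\bigtriangleup$, contributing negative exponential in the free time), and the $d$-sequence is precisely designed so that $\prod$ of these mixed contributions telescopes into a single $\hat d_{n,s}(c)$ times a geometric factor. The factor $2^{-s}$ arises by absorbing the combinatorial multiplicity $N_{k,s} < 2^{k+s-1}$ (bounding the number of sequences with $\sum p_i = k$) against the smallness condition (2) we fixed on $\delta$, namely $\sum_{s\geq 1}\sum_{(p_1,\dots,p_s), p_i\geq p_\delta}\prod \zeta\gamma_{p_i}(c)\leq 1$, with $\zeta$ chosen large enough to swallow the $({K_0}/{K})$, $K_1^{-1}$ and the $2$ from $N_{k,s}$; the residual geometric decay in $s$ survives as $2^{-s}$.

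The main obstacle I expect is the careful bookkeeping in the second step: correctly tracking which critical point $c$ is bound at each deep return (the $\gamma_{p_i}$ may involve different critical points with different orders), ensuring the extracted index $i = \max\{[\eta n/(2s^2)], L\}$ is legitimate (one needs $p_j \geq 2i$ roughly, which follows from $p_j \geq \eta n/s$ only after controlling how the $s^2$ rather than $s$ appears — this comes from also bounding the free times and the positions $\nu_i$), and verifying that the constants can be chosen uniformly. The truncation at $L$ is a convenience allowing later optimization over $L$ in the exponential and stretched-exponential regimes, so here it only needs to be carried along formally. I would organize the proof so that the heavy combinatorial lemma (the analogue of Lemma 3.6–3.7 of \cite{BLS}) is cited, and only the modification forced by critical points of different orders — i.e., that $d_n(c)$ rather than $(|Df^n(f(c))|)^{-1/(2l(c)-1)}$ is the right quantity, which is exactly why Lemma~\ref{expanding} was proved with the recurrence-dependent binding period — is spelled out in detail. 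That is the genuinely new content, and the rest follows \cite{BLS} mutatis mutandis.
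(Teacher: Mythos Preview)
Your proposal has the right overall architecture (group by combinatorics, pigeonhole on the $p_i$, use Lemma~\ref{important} for the earlier returns, absorb constants via condition~(2) on $\delta$), but the central step --- the extraction of $\hat d_{n,s}(c)$ --- is misconceived, and this is a genuine gap.

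First, a minor point: the intervals $w\in\hat P_n''$ have $\hat p|_w>n$, so by definition they have \emph{not} reached large scale; your claim that $|f^m(w)|$ is bounded below is false. This does not wreck the inequality (one only needs $|f^m(w)|\le 1$), but it signals that you are thinking of the wrong object.

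The real problem is your claim that $d_i(c)$ arises by ``telescoping'' the product $\prod_{i\in S_d}\gamma_{p_i}(c_i)$ together with free-time expansion. It does not. The quantity $d_n(c)=\min_{i<n}(\gamma_i(c)/|Df^i(f(c))|)^{1/l(c)}|f^i(c)-\tilde c|^{l(\tilde c)/l(c)}$ involves the derivative $|Df^i(f(c))|$ along the \emph{critical} orbit and the recurrence term $|f^i(c)-\tilde c|^{l(\tilde c)/l(c)}$; there is no way to manufacture these from a product of $\gamma_{p_i}$'s alone. In the paper's proof the factor $d_{p_j}(c)$ appears for a completely different reason: one first selects the first index $j'$ with $p_{j'}\ge \eta n/(2j'^2)$ (this is why $s^2$ rather than $s$ appears --- the sum $\sum_{j}\eta n/(2j^2)<\eta n$ forces such a $j'$ to exist), sets $j=\max\{j',L\}$, and then bounds the \emph{length} of the image interval $f^{\nu_j}(\tilde w_{p_1\dots p_j})=(x,y)$ directly. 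Since $p(x),p(y)\ge p_j$, the binding inequality $|f^i(x)-f^i(c)|\le\gamma_i(c)|f^i(c)-\tilde c|$ combined with the distortion Claim from the proof of Lemma~\ref{expanding} gives, for every $i<p_j$,
\[
|x-c|^{l(c)}\ \lesssim\ \frac{\gamma_i(c)\,|f^i(c)-\tilde c|^{l(\tilde c)}}{|Df^i(f(c))|},
\]
and taking the minimum over $i<p_j$ yields $|x-y|\le 2|x-c|\lesssim d_{p_j}(c)\le \hat d_{n,j}(c)$. Only \emph{after} this geometric bound on the image at level $j$ does one invoke Lemma~\ref{important} (applied up to return $j-1$, not $s$) and Lemma~\ref{small} to control the remaining sum $\sum_{p_1,\dots,p_{j-1}}(\cdot)$ and produce the $2^{-j}$. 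So the quantity $d_n(c)$ encodes the \emph{size of a binding neighbourhood}, not a product of expansion factors; this is exactly the modification forced by unequal critical orders, and it is the step you need to spell out.
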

\begin{proof}
(It suffices to take our definition of $\gamma_n(c)$ and use the
proof of Lemma 3.6 in \cite{BLS}.) Given a sequence $(\nu_1,
p_1),\ (\nu_2, p_2), \dots,\ (\nu_s, p_s)$, let $p_{j'}$ be the
first return term such that $p_{j'}\geq \frac{\eta n}{2{j'}^2}$.
Because $p_1+p_2+\dots+ p_s\geq \eta n$, such $j'$ exists. Take
$j=\max\{L,j'\}$.

Let $\tilde{w}_{p_1p_2,...,p_s}$ be the union of adjacent
intervals $w_{p_1p_2...p_{j-1}p}$ with common return times
$\nu_1,\dots,\nu_j$ and $p\geq p_j$. Then $f^{\nu_j}$ maps
$\tilde{w}_{p_1p_2,\dots,p_j}$ diffeomorphically  onto a interval
$(x,y)$ such that $p(x),p(y)\geq p_j$. Assume without loss of
generality that $|x-c|\geq |y-c|$ and $\tilde{c}$ is the closest
critical point to $f^i(c)$, therefore for $i<p_j$,
\begin{equation}
\begin{split}
\gamma_i(c)|f^i(c)-\tilde{c}|&\geq |f^i(x)-f^i(c)| \geq K
|Df^{i-1}(f(c))||f(x)-f(c)| \\
&\geq K |Df^{i-1}(f(c))|{|x-c|}^{l(c)}\\
&\geq KK_l
\frac{|Df^{i}(f(c))|{|x-c|}^{l(c)}}{{|f^i(c)-\tilde{c}|}^{l(\tilde{c})-1}},
\end{split}
\end{equation}
where $K$ is the constant in the proof of Lemma \ref{expanding}.
This gives that
$$
\gamma_i(c){|f^i(c)-\tilde{c}|}^{l(\tilde{c})}\geq KK_l
|Df^{i}(f(c))|{|x-c|}^{l(c)}.
$$
Thus
$$
|x-y|\leq 2|x-c|\leq 2KK_l
{(\frac{\gamma_i(c)}{|Df^{i}(f(c))|})}^{1\diagup l(c)}
{|f^i(c)-\tilde{c}|}^{l(\tilde{c})\diagup l(c)}.
$$
Since the above inequality holds for all $i\leq p_j$, and by the
definition of $d_{p_j}(c)$,  we can obtain that
$$
|x-y|\leq 2KK_l  d_{p_j}(c) \leq 2KK_l \max_{p\geq \frac{\eta
n}{2j^2}}d_p(c)= 2KK_l \hat{d}_{n,j}(c).
$$

Let $S_s$ and $S_d$ be shallow returns times and deep return times
before the indices smaller than $j$, $S_d':=S_d\backslash\{j\}$,
since $f^{\nu_j}$ maps $\tilde{w}_{p_1p_2,\dots,p_j}$
diffeomorphically onto an interval $(x,y)$,  and by Lemma
\ref{important}, we have
\begin{equation}
\begin{split}
\sum_{w\in \hat{P}_n'', s(w)\geq L}|w|&\leq \sum_{j=L}^n
\sum_{p_1,p_2,\dots,p_j}|\tilde{w}_{p_1,\dots,p_j}| \\
&\leq \sum_{j=L}^{n} 2KK_l\max_{c\in
\mathcal{C}_c}\hat{d}_{n,j}(c)
\sum_{p_1,p_2,\dots,p_{j-1}}4^j(\frac{K_1}{K})^{\#S_d}{\rho}^{\#S_{s,s}}\prod_{i\in
S_d'}\frac{1}{DF_{p_i}}.
\end{split}
\end{equation}
On the other hand, by an element calculation,
$$
\sum_{p_1,p_2,\dots,p_{j-1}}4^j\frac{K_1}{K}^{\#S_d}{\rho}^{\#S_{s,s}}\prod_{i\in
S_d'}\frac{1}{DF_{p_i}} \leq 2^{-j}\frac{512K_1}{K}
\sum_{p_1,p_2,\dots,p_{j-1}} {(8\rho)}^{\#S_{s,s}}\prod_{i\in
S_d'}\frac{64K_1}{KDF_{p_i}}.
$$
Notice that $p_i\geq p_{\delta}$ for $i\in S_d$, and we can take
$\epsilon$ so small that $\rho\leq1/8$. Thus by Lemma \ref{small}
and the choose of $\delta$, we have
$$
\sum_{p_1,p_2,\dots,p_j\geq p_{\delta}}
{(8\rho)}^{\#S_{s,s}}\prod_{i\in S_d'}\frac{64K_1}{KDF_{p_i}}\leq
1.
$$
Therefore, take $C_1=1024K_1K_l$, the Lemma follows.
\end{proof}

To distinguish the exponential and stretched exponential case, we
need the following Lemma.
\begin{lemma}\label{exponential}
Assume that there exist $C,\beta>0$ and $\alpha\in (0,1]$ such
that $\gamma_n(c)\leq Ce^{-\beta n^{\alpha}}$ for all positive
integer $n$ and $c\in \mathcal{C}_c$. Then for each
$\hat{\alpha}\in (0,\alpha)$ (or $\hat{\alpha}=1$ if $\alpha=1$)
there exist $\rho,C',\beta'$ such that
$$
\sum_{w\in \hat{P}_{n_-}''}|w|\leq C'e^{-\beta'n^{\alpha}}.
$$
\end{lemma}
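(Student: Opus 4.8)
The plan is to estimate $\sum_{w\in \hat P_{n_-}''}|w|$ by first isolating, inside the associated sequence $(\nu_1,p_1),\dots,(\nu_s,p_s)$ of each $w\in\hat P_{n_-}''$, the set $S_d$ of deep return indices, and exploiting the fact that $\sum_i p_i>\eta n$ while $s\le \rho n^{\hat\alpha}$. A pigeonhole argument then forces at least one deep return term $p_i$ to be large — of order $\eta n / s \gtrsim \eta n^{1-\hat\alpha}$ (or, after the $j$--selection as in Lemma \ref{defintion reason}, of order $\eta n/(2j^2)$). Since $\hat\alpha<\alpha$, this lower bound on some $p_i$ is still a power of $n$ with a positive exponent, so by hypothesis $\gamma_{p_i}(c)\le C e^{-\beta p_i^{\alpha}}$ is superpolynomially — indeed stretched-exponentially — small in $n$. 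The hard part is bookkeeping: one must run exactly the same summation over sequences as in Lemma \ref{defintion reason} (and Lemma \ref{important}), keeping the factor $\prod_{i\in S_d'}(DF_{p_i})^{-1}\le \prod_{i\in S_d'}(K_1\gamma_{p_i}^{-1})^{-1}$ under control with Lemma \ref{small}, while peeling off one extra factor $\gamma_{p_i}(c)$ from the distinguished large-$p_i$ term to produce the $e^{-\beta' n^{\alpha}}$ gain.

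Concretely, I would proceed as follows. First, fix $\hat\alpha\in(0,\alpha)$ and $\rho>0$ (to be shrunk later), and restrict attention to $w\in\hat P_{n_-}''$, so $s(w)\le \rho n^{\hat\alpha}$ and $\sum p_i>\eta n$. Second, for each such $w$ pick (as in Lemma \ref{defintion reason}) the first deep index $j'$ with $p_{j'}\ge \eta n/(2{j'}^2)$ and set $j=\max\{L,j'\}$ with $L$ chosen so that $\eta n/(2j^2)\ge \eta n/(2\rho^2 n^{2\hat\alpha}) = (\eta/2\rho^2)n^{1-2\hat\alpha}$; thus $p_j \ge c_0 n^{1-2\hat\alpha}$ for a constant $c_0$, provided I first arrange $\hat\alpha<1/2$ (which is harmless: for the stretched-exponential tail any small $\hat\alpha$ suffices, and the $\hat\alpha=1$, $\alpha=1$ exceptional case will be handled in the companion exponential lemma). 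Third, group the $w$'s as in Lemma \ref{defintion reason} into the diffeomorphic images $\tilde w_{p_1\dots p_j}$, apply Lemma \ref{important} to bound $|\tilde w_{p_1\dots p_j}|$ by $4^j (K_0/K)^{\#S_d}\rho^{\#S_{s,s}}\prod_{i\in S_d}(DF_{p_i})^{-1}$, and split off the $i=j$ factor: $(DF_{p_j})^{-1}\le K_1^{-1}\gamma_{p_j}(c)\le K_1^{-1}C e^{-\beta p_j^{\alpha}} \le K_1^{-1}C\, e^{-\beta c_0^{\alpha} n^{(1-2\hat\alpha)\alpha}}$.

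Fourth, sum the remaining product $\sum_{p_1,\dots,p_{j-1}}4^j(K_0/K)^{\#S_d'}\rho^{\#S_{s,s}}\prod_{i\in S_d'}(DF_{p_i})^{-1}$ exactly as in Lemma \ref{defintion reason}: absorb the $4^j$ into a geometric $2^{-j}$ times a constant, use $p_i\ge p_\delta$ and Lemma \ref{small} with $\zeta$ as fixed in Section 3 (shrinking $\epsilon$ so $\rho\le 1/8$), so this sum is $\le$ const $\cdot 2^{-j}$. Fifth, sum over $j$ from $L$ to $n$ and over $s\le \rho n^{\hat\alpha}$: the geometric series in $j$ converges, the number of admissible $s$ is at most $n$, and the extracted exponential factor $e^{-\beta c_0^{\alpha} n^{(1-2\hat\alpha)\alpha}}$ dominates the polynomial loss. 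Writing $\tilde\alpha := (1-2\hat\alpha)\alpha$, which can be made to lie in any prescribed interval $(0,\alpha)$ by choosing $\hat\alpha$ small, and renaming $\tilde\alpha$ back to the $\hat\alpha$ of the statement, we obtain $\sum_{w\in\hat P_{n_-}''}|w|\le C' e^{-\beta' n^{\hat\alpha}}$ for suitable $C',\beta'>0$. The main obstacle, as noted, is purely the combinatorial-analytic bookkeeping of keeping the $\prod(DF_{p_i})^{-1}$ sum summable (via Lemma \ref{small}) while simultaneously harvesting a stretched-exponential-in-$n$ factor from the single large $p_j$; once the pigeonhole lower bound $p_j\gtrsim n^{1-2\hat\alpha}$ is in hand, the rest follows the template of Lemma \ref{defintion reason} verbatim.
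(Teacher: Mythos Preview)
Your pigeonhole approach does not prove the lemma as stated: extracting a single factor $(DF_{p_j})^{-1}\le K_1^{-1}\gamma_{p_j}(c)$ with $p_j\ge c_0 n^{1-2\hat\alpha}$ yields only the exponent $n^{(1-2\hat\alpha)\alpha}$, not the claimed $n^{\alpha}$. The ``renaming'' you propose does not close this gap, because the $\hat\alpha$ governing your exponent is the \emph{same} $\hat\alpha$ that defines $\hat P_{n_-}''$ via $s\le\rho n^{\hat\alpha}$; you cannot send it to~$0$ to sharpen the $\hat P_{n_-}''$ bound without simultaneously destroying the $2^{-\rho n^{\hat\alpha}}$ bound for $\hat P_{n_+}''$ used in the proof of Proposition~2. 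Optimizing the two against each other gives an overall tail exponent of at most $\alpha/(1+2\alpha)$, so Proposition~2 would not follow for every $\hat\alpha<\alpha$. (Your side restriction $\hat\alpha<1/2$ is not harmless for the same reason.)

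The argument in \cite{BLS}, to which the paper defers, is different in kind. One keeps the \emph{entire} product
\[
\prod_{i\in S_d}(DF_{p_i})^{-1}\le(C/K_1)^{\#S_d}\exp\Bigl(-\beta\sum_{i\in S_d}p_i^{\alpha}\Bigr)
\]
and uses the elementary inequality $\sum_i p_i^{\alpha}\ge\bigl(\sum_i p_i\bigr)^{\alpha}\ge(\eta n)^{\alpha}$ for $\alpha\in(0,1]$, which already produces a factor $e^{-\beta(\eta n)^{\alpha}}$ for each $w$. The price is that the sum over sequences can no longer be absorbed by Lemma~\ref{small}; instead one invokes the combinatorial bound $N_{k,s}^{+}=\binom{k-1}{s-1}\le\exp(\hat\zeta\, k^{\alpha}\log k)$ whenever $s\le\zeta k^{\alpha}$, with $\hat\zeta\to0$ as $\zeta\to0$ (this is Lemma~3.3 of \cite{BLS}, the lemma commented out in the present paper). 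The hypothesis $s\le\rho n^{\hat\alpha}$ together with $k\ge\eta n$ forces $s/k^{\alpha}\le(\rho/\eta^{\alpha})n^{\hat\alpha-\alpha}\to0$, so the combinatorial count is $e^{o(k^{\alpha})}$; the remaining geometric prefactors contribute at most $e^{O(n^{\hat\alpha})}$. All of this is absorbed by $e^{-\beta(\eta n)^{\alpha}}$, and this is precisely where the condition $\hat\alpha<\alpha$ enters and why the full exponent $n^{\alpha}$ survives.
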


\begin{proof}
See the proof of Lemma 3.7 in  \cite{BLS}.
\end{proof}

{\it Proof of Proposition 3.1} We first show that $\hat{J}$ has
full measure in $J$. Because $f$ has no wandering intervals
\cite{cd}, almost all $x\in J$, $f^{n_k}(x)\rightarrow
\mathcal{C}_c$ as $n_k\rightarrow \infty$. Hence $x$ has
infinitely many deep returns (if it has not reach large scale),
assume that $x$ is contained in $w_{p_1,\dots,p_s}$ with arbitrary
$s$. By Lemma \ref{time estimate} and Lemma \ref{defintion
reason}, we have
$$
|\{x\in J; \hat{p}(x)=\infty \}|\leq \lim_{s \rightarrow \infty}
\sum_{(p_1,\dots,p_s)}|w_{(p_1,\dots,p_s)}|=0.
$$
By the construction of the induce map $\hat{F}$, $\hat{F}$ is a
diffeomorphism with uniformly bounded distortion, and
$|\hat{F}(w)|\geq \delta'$ on any component $w$ of $\hat{J}$.

Next we will show the estimates mentioned in the Proposition.
Notice that we have exponential bounds for $\hat{P}_n'$ by Lemma
\ref{time estimate}, so we only consider $\hat{P}_n''$ here. First
observe that for each $k$ there are at most
$$
\#\{n; \ k-1\leq \eta n/2s^2\leq k\}\leq 2s^2/\eta
$$
numbers $n$ such that $k=[\eta n/2s^2]$. Therefore, using Lemma
\ref{defintion reason} with $L=1$,
\begin{equation*}
\begin{split}
\sum_{n\geq 1}\sum_{s=1}^n \hat{d}_{n,s}(c)&\leq \sum_{s\geq 1}
\frac{2s^2}{\eta} 2^{-s} \sum_{k\geq 1}
{(\gamma_k(c)/|Df^k(f(c))|)}^{1/l(c)}{|f^k(c)-\hat{c}|}^{l(\hat{c})/l(c)}\\
&\leq \frac{12}{\eta}\sum_{k\geq 1}\gamma_{k-1}(c) \leq \infty.
\end{split}
\end{equation*}
Hence
$$
\sum_{n}|\hat{p}>n|J|=\sum_n|\hat{P}_n'|+\sum_{n}|\hat{P}_n''|
\leq \infty.
$$

For polynomial case, take $L=1$ in Lemma \ref{defintion reason},
if  $\gamma_n(c)<Cn^{-\alpha}$, $\alpha>1$ for all $c\in
\mathcal{C}_c$,
 and $n\geq 1$, then
\begin{equation*}
\begin{split}
\sum_{w\in \hat{P}_n''}&\leq C_1 \max_{c\in
\mathcal{C}_c}\sum_{s=1}^{n}2^{-s} \hat{d}_{n,s}(c)\leq
C_1\max_{c\in
\mathcal{C}_c}\sum_{s=1}^{n}2^{-s}d_{[\frac{\eta n}{2s^2}]}(c)\\
&\leq C_1\max_{c\in C_c} \sum_{s=1}^{n} 2^{-s}{[\frac{\eta
n}{2s^2}]}^{-\alpha} \leq \frac{12C_1}{{(\eta n)}^\alpha}.
\end{split}
\end{equation*}

For the stretched exponential case, if $\gamma_n(c)\leq Ce^{-\beta
n^{\alpha}}$ for all $n$ and $c\in \mathcal{C}_c$, using Lemma
\ref{defintion reason} and take $L=\rho {n}^{\hat{\alpha}}$, we
obtain
\begin{equation*}
\begin{split}
\sum_{w\in \hat{P}_{n_+}''}&\leq C_1 \max_{c\in
\mathcal{C}_c}\sum_{s= \rho n^{\hat{\alpha}}}^{n}2^{-s}
\hat{d}_{n,s}(c)\leq C_1\max_{c\in \mathcal{C}_c}\sum_{s=
\rho n^{\hat{\alpha}}}^{n}2^{-s}d_{[\frac{\eta n}{2s^2}]}(c)\\
&\leq C_1 2^{-\rho n^{\hat{\alpha}}}\max_{c\in \mathcal{C}_c}
d_{\rho n^{\hat{\alpha}}}(c)\leq C_1 e^{-\rho'n^{\hat{\alpha}}},
\end{split}
\end{equation*}
where the third inequality follows from that $d_n(c)$ is
decreasing as $n$. On the other hand, by Lemma \ref{exponential},
we have for each $\hat{\alpha}\in (0,1)$,
$$
\sum_{w\in \hat{P}_{n_-}''}|w| \leq C'e^{-\beta n^{\alpha}}.
$$
Since $\hat{\alpha}<\alpha$, we obtain result in the stretched
case.

For the exponential case, it suffices to take $\alpha=1$ in the
stretched case.

So we obtain all results claimed in Proposition.

\section{The construction of the full markov map}
In this Section we will construct a full Markov map $\hat{f}$ and
give its estimates based on the map $\hat{F}: \hat{J} \rightarrow
X$ constructed in previous section. Let $J\in X$ be an arbitrary
interval, $\hat{P}$ be the corresponding partition constructed in
Section 4.

\begin{lemma}\label{full Markov}
Let $f\in \mathcal{A}_1$. There exists an open interval $\Omega$
around each critical point $c$ that is contained in a minimal
cycle $X$, an integer $t_0$ and a constant $\xi>0$ such that for
every $w\in \hat{P}$, there exists an interval $\tilde{w}\subset
w$ with the following properties:

(1) there exists an integer $t\leq t_0$ such that
$f^{\hat{p}(w)+t}$ maps $\tilde{w}$ diffeomorphismlly onto
$\Omega$,

(2) $|\tilde{w}|\geq \xi|w|$,

(3) both components of $f^{\hat{p}(w)}(w\setminus \tilde{w})$ have
length $\geq \delta'/3$.
\end{lemma}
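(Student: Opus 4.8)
The plan is to start from an arbitrary element $w \in \hat{P}$ and recall from the construction in Section 4 that $\hat{F}(w) = f^{\hat{p}(w)}(w)$ has reached the large scale, so $|f^{\hat{p}(w)}(w)| \geq \delta'$, and there is an interval $T \supset w$ on which $f^{\hat{p}(w)}$ is a diffeomorphism onto an $\epsilon$-scaled neighborhood of $f^{\hat{p}(w)}(w)$ (Lemma \ref{bound distortion 1}). First I would fix, for each critical point $c$ in the minimal cycle $X$, a small open interval $\Omega = \Omega(c)$ around $c$ whose closure lies in the interior of $X$; its size will be chosen below, uniformly over the finitely many critical points. Since $f^{\hat{p}(w)}(w)$ has length at least $\delta'$ and every interval of definite length is mapped by forward iteration onto intervals that eventually cover $X$ (using that $f$ has no wandering intervals and no homtervals, and that preimages of each $c$ are dense in the minimal cycle, by Proposition 3.1), there is an integer $t$ — bounded by some $t_0$ independent of $w$, because $|f^{\hat{p}(w)}(w)| \geq \delta'$ gives a uniform lower bound on the size of the interval being iterated — such that $f^{\hat{p}(w)+t}$ maps some subinterval of $w$ onto $\Omega$.

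The key step is to produce this $t$ together with properties (2) and (3) simultaneously, using the Koebe principle. Let $V := f^{\hat{p}(w)}(w)$, $|V| \geq \delta'$, and let $V' := f^{\hat{p}(w)}(T)$ be the $\epsilon$-scaled neighborhood. Because intervals of length $\geq \delta'$ expand to large scale in a uniformly bounded number $t_0$ of steps (contraction principle plus uniform expansion outside $\bigtriangleup$, Lemmas \ref{bbc} and \ref{Uniformly expanding}), I can find $t \leq t_0$ and a subinterval $U \subset V$ with $f^t(U) = \Omega$ and with $f^t$ a diffeomorphism on $f^t$-pullback of a definite neighborhood of $\Omega$ inside $V'$; shrinking $\Omega$ if necessary so that a fixed-scale neighborhood of $\Omega$ still sits inside the image of $V$, the Koebe principle gives bounded distortion for $f^t$ on $U$, hence $|U| \geq c_0 |V| \geq c_0 \delta'$ with $c_0$ depending only on $\epsilon$ and the Koebe constant. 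Pulling $U$ back by the diffeomorphism $f^{\hat{p}(w)}|_T$, again with distortion bounded by $K_\epsilon$ (Lemma \ref{bound distortion 1}), yields $\tilde{w} \subset w$ with $f^{\hat{p}(w)+t}(\tilde{w}) = \Omega$ and $|\tilde{w}| \geq \xi |w|$ for a uniform $\xi>0$; this is (1) and (2). For (3), note the two components of $V \setminus U$: choosing $\Omega$ small relative to $\delta'$ forces each of these to have length $\geq \delta'/3$ (indeed one can arrange $U$ to sit roughly in the middle of $V$, since we have freedom in choosing which preimage-branch of $\Omega$ to use and $V$ has length $\geq \delta'$), and since $f^{\hat{p}(w)}(w \setminus \tilde{w}) = V \setminus U$, property (3) follows directly.

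I expect the main obstacle to be making the choice of $\Omega$, $t_0$ and $\xi$ genuinely \emph{uniform} over all $w \in \hat{P}$: one must check that the number of iterates needed to grow a $\delta'$-interval to cover a neighborhood of some fixed $\Omega(c)$ is bounded independently of where $V$ sits, and that in doing so one can always keep $U$ away from the endpoints of $V$ by a definite fraction so that (3) holds. This is where the absence of wandering intervals, the density of preimages of $c$ in the minimal cycle, and the uniform expansion estimates must be combined carefully; the bounded-distortion bookkeeping via Koebe is then routine. One technical point to handle is that the relevant forward iterate $f^t$ may pass through the critical neighborhood $\bigtriangleup$, so the pullback interval must be chosen inside a single monotone branch — this is arranged by taking $U$ small enough (equivalently $\Omega$ small enough) that its preimage under the appropriate branch of $f^t$ avoids $\mathcal{C}$, at the cost only of possibly enlarging $t_0$.
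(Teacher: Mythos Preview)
Your proposal is correct and follows the same overall strategy as the paper, but the paper's execution is a bit more direct in one respect. Instead of iterating $V=f^{\hat p(w)}(w)$ forward and invoking the contraction principle / uniform expansion to bound $t_0$, the paper uses the density of preimages of $c$ in $X$ (established in Proposition~1) \emph{directly}: one simply chooses $t_0$ so that $X\setminus\bigcup_{i=0}^{t_0}f^{-i}(c)$ contains no interval of length exceeding $\delta'$, so that every interval $V$ with $|V|\ge\delta'$ already contains a preimage point $x=f^{-t}(c)$ with $t\le t_0$. Around each such $x$ one fixes once and for all a small interval $w_x$ (with $x$ in its middle fifth and $|w_x|\le\delta'/10$) on which $f^t$ is a diffeomorphism onto a common neighborhood $\Omega$ of $c$; then $\tilde w$ is just the $f^{\hat p(w)}$-pullback of $w_x$. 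This framing dissolves the two technical worries you flag: since $w_x$ is defined as a local inverse branch at a preimage of $c$, the composition $f^t|_{w_x}$ is automatically a diffeomorphism (no need to track whether the forward orbit meets $\bigtriangleup$), and the smallness of $w_x$ together with the freedom to pick a preimage $x$ well inside $V$ gives property~(3) without a separate Koebe argument for $f^t$. Your forward-iteration route with Koebe bookkeeping would work too, but the preimage-density shortcut is what the paper actually uses.
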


\begin{proof}
By Theorem 1, the preimages of each critical point $c\in
\mathcal{C}_c$ are dense in $X$. Then there exists a finite
integer $t_0>1$ such that $X\setminus\cup_{i=0}^{t_0}f^{-i}(c)$
contains no interval of length greater than $\delta'$. Let
$x:=f^{-t}(c)$ for some $0<t\leq t_0$, we can choose $w_x$ be a
maximal interval containing $x$ such that $w_x$ diffeomorphismly
onto a neighborhood $\Omega$ of $c$ and  $x$ in $w_x$ middle
fifth. By adjusting the size of $w_x$, we can make sure that they
all map onto exactly the same critical neighborhood $\Omega$ and
$w_x\leq {\delta'}/{10}$. Let $\tilde{w}\subset w\subset \hat{P}$
be the interval that is mapped onto $w_x$ by $f^{\hat{p}(w)}$,
such a $\tilde{w}$ exists by Proposition 2. Then the first and the
third statement follow. Because $f^{\hat{p}(w)}$ has bounded
distortion on $w$ by Lemma \ref{bound distortion 1}, and the
diffeomorphism $f^t$ ($t\leq t_0$) doesn't effect the distortion
significantly, the second statement follows immediately.
\end{proof}

We have fixed $\Omega$ and let $\delta''=\min\{\delta'/3,
|\Omega|\}$. We shall define a full Markov map by the construction
in Proposition 2 and Lemma \ref{full Markov}.

Let $\hat{f}:\Omega \rightarrow \Omega$ be defined as follows,
$\mathcal{Q}$ be the associated partition of $\Omega$ and $R$ be
the induced time function. For each element $w$ in the partition
$\hat{P}$ of $\Omega$, let $\tilde{w}$ denote the subinterval
given in Lemma \ref{full Markov}. We put $\tilde{w}$ into
$\mathcal{Q}$ and define $R(\tilde{w})=\hat{p}(w)+t$. Then
$\hat{f}({\tilde{w}})=f^{R(\tilde{w})}(\tilde{w})=\Omega$ by Lemma
\ref{full Markov}. On the other hand, both components of
$f^{\hat{p}(w)}(w)\backslash f^{\hat{p}(w)}(\tilde{w})$ has size
at least $\delta'/3$ ($\geq \delta''$), we consider them as new
starting intervals and carry out the construction in Section 4 and
repeat the procedure as above. In this way, for each $w\in
\mathcal{Q}$, we can define an associated sequence before it has
reached full return (i.e., $\exists n>0, f^n(w)=\Omega$), write
$\hat{p}_1=\hat{p}(x)$ and
$\hat{p}_{i+1}(x)=\hat{p}(f^{\hat{p}_i(x)}(x))$, $1<i<s-1$, where
$s$ is the integer such that $f^s$ maps corresponding element
containing $x$ to reach full return. Then we have
$R(w)=\hat{p}_s(w)+t$ for $s\geq 1, t\leq t_0$.

Next we collect some important results on the induced full Markov
map and the return time estimates, they are important bounds used
in Young \cite{yo99}.
\begin{lemma}
For each $n\leq 0$ and each interval $w\in \mathcal{Q}$ on which
${\hat{f}}^n$ is continuous, the distortion of ${\hat{f}}^n$ is
uniformly bounded.
\end{lemma}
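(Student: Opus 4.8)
The plan is to prove this by the classical Koebe–distortion argument for full-branched Markov maps, reducing everything to a telescoping estimate whose terms are summed by a geometric series. Fix $n$ and let $w$ be an interval on which $\hat f^{n}$ is continuous, i.e.\ a level-$n$ cylinder of $\hat f$, so that $\hat f^{n}$ maps $w$ diffeomorphically onto $\Omega$ (the case $n=0$ is trivial). The first ingredient I would isolate is a \emph{uniform Koebe space} for every branch of $\hat f$: by construction each branch $\hat f|_{\tilde w}=f^{R(\tilde w)}|_{\tilde w}$ is produced by finitely many successive $\hat F$-returns (each equipped, by the large-scale rule together with the proof of Lemma \ref{bound distortion 1}, with an $\epsilon$-scaled collar on which the corresponding iterate of $f$ stays a diffeomorphism), followed by the full return onto $\Omega$, which by Lemma \ref{full Markov}(3) overshoots $f^{\hat p(w)}(\tilde w)=w_x$ by at least $\delta'/3$ on each side while $|w_x|\le\delta'/10$, and finally the diffeomorphic block $f^{t}$, $t\le t_0$, together with the middle-fifth position of $x$ in $w_x$. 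Pulling these collars back gives an interval $\widehat{\tilde w}\supset\tilde w$ on which $f^{R(\tilde w)}$ is a diffeomorphism and whose image is a $\tau$-scaled neighbourhood of $\Omega$, with $\tau>0$ independent of $\tilde w$. Since $f$ has negative Schwarzian derivative off $\mathcal C$ (hence so does $f^{R(\tilde w)}$ in the interior of each branch), the Koebe principle then gives, uniformly over branches, both a distortion bound by some $K_\tau$ and the sharper ``Lipschitz-on-the-image'' estimate
$$
\bigl|\log|D\hat f(x)|-\log|D\hat f(y)|\bigr|\le\frac{K_\tau}{|\Omega|}\,\bigl|\hat f(x)-\hat f(y)\bigr|
$$
for $x,y$ in the same branch.

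The second ingredient is uniform expansion of $\hat f$: there are $C_0>0$ and $\Lambda>1$ with $|D\hat f^{m}(x)|\ge C_0\Lambda^{m}$ for every $m$ and every point at which $\hat f^{m}$ is defined. This is exactly the hyperbolicity that makes the induced construction work, and I would assemble it from the three estimates of Section 3: outside $\bigtriangleup_\delta$ one has the uniform exponential growth of Lemma \ref{Uniformly expanding}, the last descent into $\bigtriangleup$ is paid for by the BBC bound of Lemma \ref{bbc}, and each return with binding period $p$ contributes the factor $DF_p(c)\ge K_1\gamma_p(c)^{-1}\ge 2K_1$ of Lemma \ref{expanding}; since every branch of $\hat f$ contains at least one full return, one genuinely escapes the regime of weakly expanding branches once the finitely many sub-unit constants are bounded against one another. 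The equivalent formulation I would actually use is that the cylinders shrink geometrically: $\operatorname{diam}\bigl(\hat f^{k}(w)\bigr)\le C_0^{-1}\Lambda^{-(n-k)}|\Omega|$ for $0\le k\le n$, because $\hat f^{n-k}$ maps $\hat f^{k}(w)$ onto $\Omega$ with derivative $\ge C_0\Lambda^{n-k}$.

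Granting these two ingredients, the lemma follows by telescoping: for $x,y\in w$,
\begin{align*}
\Bigl|\log\frac{|D\hat f^{n}(x)|}{|D\hat f^{n}(y)|}\Bigr|
&\le\sum_{k=0}^{n-1}\bigl|\log|D\hat f(\hat f^{k}x)|-\log|D\hat f(\hat f^{k}y)|\bigr|
\le\sum_{k=0}^{n-1}\frac{K_\tau}{|\Omega|}\,\bigl|\hat f^{k}x-\hat f^{k}y\bigr|\\
&\le\sum_{k=0}^{n-1}\frac{K_\tau}{|\Omega|}\,C_0^{-1}\Lambda^{-(n-k)}|\Omega|
\le\frac{K_\tau}{C_0(\Lambda-1)},
\end{align*}
a bound independent of $n$ and of the cylinder $w$; exponentiating yields the asserted uniform distortion bound for $\hat f^{n}$.

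I expect the genuine obstacle to be the second ingredient — producing the uniform expansion constants $C_0,\Lambda$ valid simultaneously for all of the (countably many) branches of $\hat f$. Lemma \ref{expanding} only controls $|Df^{p}|$ across a binding block, the constant $C(\delta)$ in Lemma \ref{Uniformly expanding} may be smaller than $1$, and the BBC constant $K$ likewise, so one must check that the extra iterates built into $\hat f$ (the passage from $\hat F$ to a full return onto $\Omega$, plus the block $f^{t}$ with $t\le t_0$) always suffice to defeat these sub-unit factors — precisely the bookkeeping carried out in \cite{BLS}. The point worth stressing is that this bookkeeping is insensitive to whether the critical orders agree on the two sides, because Lemma \ref{expanding} has already absorbed the two-sided orders into $\gamma_p(c)$; so nothing beyond \cite{BLS} is needed here.
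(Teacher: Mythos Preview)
Your argument is correct, but it is considerably more elaborate than what the paper intends. The paper's entire proof is the single line ``It follows from the construction of $\hat f$ immediately,'' which encodes the following direct application of the Koebe principle. Your own first ingredient already shows that every branch $\tilde w\in\mathcal Q$ is contained in an extension interval $\widehat{\tilde w}$ on which $f^{R(\tilde w)}$ is a diffeomorphism onto a fixed $\tau$-scaled neighbourhood of $\Omega$; the crucial point, implicit in Lemma~\ref{full Markov}(3) and the recursive construction, is that $\widehat{\tilde w}\subset\Omega$. Hence for an $n$-cylinder $w$ one pulls back the extension of the last branch through $\hat f^{n-1}|_{w_{n-1}}$ (which maps the parent $(n-1)$-cylinder onto $\Omega\supset\widehat{\tilde w}$), obtaining an interval $\widehat w\supset w$ on which $f^{R_n}$ is a diffeomorphism onto the same $\tau$-scaled neighbourhood of $\Omega$. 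Since $f$ has negative Schwarzian, the Koebe principle applied directly to $\hat f^n$ gives a distortion bound depending only on $\tau$.

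The difference is that this route never needs your second ingredient, the uniform expansion $|D\hat f^m|\ge C_0\Lambda^m$ --- precisely the step you flagged as the genuine obstacle. Your telescoping argument is the standard one for full Markov maps \emph{without} a Schwarzian hypothesis; it is more robust, and it is essentially how one proves the stronger Lemma~\ref{bound distortion 2} (the $\beta^{s(x,y)}$ estimate) following \cite{BLS}. But for the present lemma the negative-Schwarzian shortcut is available, and the paper takes it.
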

\begin{proof}
It follows from the construction of $\hat{f}$ immediately.
\end{proof}

\begin{lemma}\label{rates}
Let $f\in \mathcal{A}_1$, $\hat{f}=f^{R}$ be the induced full
Markov map defined above. Then we have following statements:

Summable case: Under no conditions on $d_n(c)$,
$$
\sum_n|\{R>n\}|< \infty.
$$

Polynomial case: If $d_n(c)\leq C n^{-\alpha}$, $C>0$, $\alpha>1$
for all $c\in \mathcal{C}_c$ and $n\geq 1$, then there exists
$\tilde{C}>0$ such that
$$
|\{R>n\}| \leq \tilde{C} n^{-\alpha}.
$$

Stretched exponential case: If $\gamma_n(c)\leq Ce^{-\beta
n^{\alpha}}$, $C>0$, $\alpha\in (0,1)$, $\beta>0$ for all $c\in
\mathcal{C}_c$ and $n\geq 1$, then for each $\tilde{\alpha}\in
(0,\alpha)$ there exist $\tilde{\beta}, \tilde{C}>0$ such that
$$
|\{R>n\}| \leq \tilde{C} e^{-\tilde{\beta}n^{\tilde{\alpha}}}.
$$

Exponential case: If $\gamma_n(c)\leq Ce^{-\beta n}$, $C>0$,
$\beta>0$ for all $c\in \mathcal{C}_c$ and $n\geq 1$, then there
exist $\tilde{\beta}, \tilde{C}>0$ such that
$$
|\{R>n\}| \leq \tilde{C} e^{-\tilde{\beta}n}.
$$
\end{lemma}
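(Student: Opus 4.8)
The plan is to transfer the tail estimates for the induced map $\hat F:\hat J\to X$ (Proposition 3.1) to tail estimates for the full Markov map $\hat f=f^R:\Omega\to\Omega$, using the combinatorial structure of $R$ described just before the Lemma: an orbit segment of $\hat f$ on an element $w\in\mathcal Q$ decomposes into $s$ successive blocks of lengths $\hat p_1(w),\hat p_2(w),\dots,\hat p_{s-1}(w)$ (each a copy of the induced time $\hat p$ from Section 4, started on an interval of length $\geq\delta''$), followed by one further excursion of length $\hat p_s(w)$, plus a bounded tail $t\leq t_0$. So $R(w)=\sum_{i=1}^{s}\hat p_i(w)+t$, and the key point is that at each stage the construction throws away a definite proportion of length (the two pieces of size $\geq\delta'/3$ that do not reach full return), so that the measure of the set on which $s(w)\geq k$ decays geometrically in $k$: there is $\theta_0\in(0,1)$ with $|\{s\geq k\}|\leq\theta_0^{\,k}$. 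This is the same phenomenon exploited in \cite{BLS} and I would cite Lemma 3.8 / the corresponding step there, noting that it is insensitive to the critical orders.

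First I would write, for fixed $n$,
$$
|\{R>n\}|\;\leq\;\sum_{k\geq 1}\;\sum_{\substack{w\in\mathcal Q\\ s(w)=k}}\bigl|\{x\in w;\ \hat p_1(x)+\dots+\hat p_k(x)+t>n\}\bigr|,
$$
and split the inner event according to whether some single block $\hat p_i$ exceeds $n/(2k)$ or all of them are $\leq n/(2k)$; in the latter case one needs $k\gtrsim n/(2t_0+n/k\cdot k)$... more simply, since $t\leq t_0$ is bounded, $R>n$ forces either $s(w)\geq \epsilon_1 n$ for a small fixed $\epsilon_1$ (handled by the geometric bound $\theta_0^{\epsilon_1 n}$, which beats every polynomial and every stretched/genuine exponential we must produce), or some block $\hat p_i>n/(2s)$ with $s<\epsilon_1 n$. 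For that latter alternative I would use the bounded-distortion/bounded-image properties of the intermediate induced maps (Lemma \ref{bound distortion 1} and Lemma \ref{full Markov}(2),(3)) to factor the measure as a product over the $s$ blocks, each factor being controlled by Proposition 3.1 applied on an interval of length $\geq\delta''$, so that
$$
\sum_{\substack{w\in\mathcal Q\\ s(w)=s}}|\{\hat p_i>m\ \text{for some }i\}|\;\leq\;C^{s}\,s\,\bigl|\{\hat p>m\,|\,J\}\bigr|
$$
with $C$ absorbed by the $\theta_0^{s}$ coming from the geometric decay in $s$. Summing over $s\leq\epsilon_1 n$ and over $m>n/(2s)$ then reduces everything to the four tail estimates of Proposition 3.1 with $n$ replaced by $n/(2\epsilon_1 n)=\mathrm{const}$... — here I must be a little careful: the effective time $m\sim n/(2s)$ ranges down to $O(1)$ when $s\sim n$, which is exactly why the geometric gain $\theta_0^{s}$ in $s$ is essential and must strictly dominate. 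Carrying this out gives: summable case from $\sum_n|\{\hat p>n|J\}|<\infty$; polynomial $n^{-\alpha}$ preserved up to a constant (using $\sum_s\theta_0^s (n/2s)^{-\alpha}\lesssim n^{-\alpha}$); stretched exponential $e^{-\beta n^{\alpha}}$ with any $\tilde\alpha<\alpha$ (the loss from $\tilde\alpha$ to $\alpha$ inside Proposition 3.1 plus the $s$-summation costs another arbitrarily small exponent); exponential preserved with a smaller rate.

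The main obstacle is the bookkeeping in the second alternative: making precise that the measures of the successive blocks multiply (rather than just the lengths of nested intervals), which requires the uniform distortion bound to hold for the composition $\hat f^{\,s}$ restricted to an element of $\mathcal Q$ — this is exactly the content of the (unnumbered) distortion Lemma stated right before \ref{rates}, so I would invoke it — and then verifying that the geometric factor $\theta_0^{s}$ genuinely beats the product of the $s$ distortion/combinatorial constants $C^{s}$ for $\delta'$ (equivalently $\theta_0$, equivalently $\epsilon$) chosen small enough. Once the product structure and the $s$-summability are in place, the four cases are a routine matching of Proposition 3.1's estimates against $m\sim n/2s$, and I would remark that this is essentially Lemma 3.8 of \cite{BLS}, the only change being the replacement of their $\gamma_n$-quantities by the $\gamma_n(c)$, $d_n(c)$ defined here, which does not interact with the combinatorics.
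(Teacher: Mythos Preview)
Your approach is correct and coincides with the paper's, which simply defers to the proof of Proposition~4.1 in \cite{BLS}: decompose $R=\sum_{i\le s}\hat p_i+t$, use Lemma~\ref{full Markov}(2) together with bounded distortion to get geometric decay $|\{s\ge k\}|\le(1-\xi)^{k}$, and then feed in the tail estimates of Proposition~3.1 on each block exactly as you describe. One small wording slip: the geometric gain in $s$ comes from the fact that at each stage a definite proportion $\xi$ of mass \emph{does} reach full return (the subinterval $\tilde w$), not from the side pieces being ``thrown away''---those side pieces are precisely what continues the construction---but your conclusion and the subsequent case analysis are unaffected.
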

\begin{proof}
See the details of the proof of Proposition 4.1 in \cite{BLS}
\end{proof}
For $x, y \in \mathcal{Q}$, we let separation time $s(x,y)\geq 0$
be the least integer $k$ such that ${\hat{f}^k}(x)$ and
${\hat{f}^k}(y)$ belong to different elements of $\mathcal{Q}$. So
$s(x,y)=0$ if $x,y$ are in different component of $\mathcal{Q}$,
and $s(x,y)\geq 1$ if they belongs to the same component of
$\mathcal{Q}$.

\begin{lemma}\label{bound distortion 2}
There exist $\beta\in (0,1)$ and $C>0$ such that for all $w\in
\mathcal{Q}$ and all $x,y\in w$, we have
$$
|\frac{D\hat{f}(x)}{D\hat{f}(y)}-1|\leq C{\beta}^{s(x,y)}.
$$
\end{lemma}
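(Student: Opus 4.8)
The plan is to follow the standard distortion-estimate scheme for induced Markov maps built out of a composition of branches of $f$, exactly as in the Benedicks--Carleson / Bruin--Luzzatto--Strien framework. The map $\hat f$ is a composition $\hat f = f^{R}$ whose branch on each $w\in\mathcal Q$ decomposes into a bounded number of ``blocks'': the returns to $\bigtriangleup_1$ (each followed by its binding period) coming from the construction in Section~4, plus the extra bounded-length iterate $t\le t_0$ at the end that realizes the full return onto $\Omega$. So first I would record the one-step distortion control already available: Lemma~\ref{bound distortion 1} gives uniform distortion of $\hat F_J|_w$ (Koebe), and on each block of the construction the generalized-distortion estimate \eqref{generalized distoration} together with the binding estimate \eqref{in the neighborhood} from the proof of Lemma~\ref{expanding} bounds $\log D(f^k,\cdot)$ on an iterate interval by $\sum_j 2K_l^2\gamma_j(c)$, which is summable.

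Next I would set up the telescoping. Fix $w\in\mathcal Q$ and $x,y\in w$; let $n$ be the number of full returns used to define $\hat f$ on $w$ (here $n=1$ since $\hat f(w)=\Omega$ in one step of the induced dynamics, but the estimate must be stated for $\hat f^{s(x,y)}$ in the form of Lemma~\ref{bound distortion 2}, so really I consider $w$ a cylinder for $\hat f^{s(x,y)}$). Write $\log\frac{D\hat f(x)}{D\hat f(y)}$ as a sum over the iterates $0\le j< R(w)$ of $\log\frac{|Df(f^jx)|}{|Df(f^jy)|}$. Split the iterates into those spent outside $\bigtriangleup$ (where $f$ is uniformly expanding, Lemma~\ref{Uniformly expanding}, and the $C^2$ norm is bounded, so the contribution is dominated by a geometric series in the number of remaining iterates times $|f^j w|$, and $|f^j w|$ shrinks geometrically backward from the final scale by the expansion), those inside a binding period of some return (controlled by the Claim in Lemma~\ref{expanding}, giving a summable-over-the-block bound independent of which return), and the finitely many ($\le t_0$) iterates at the end (a fixed $C^2$ diffeomorphism, contributing a bounded factor). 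The key geometric input is that, by Lemma~\ref{important}, the images $|f^j w|$ at the return times $\nu_i$ decay at a definite rate in the number of preceding returns and $S_{s,s}$-indices, while between returns they are either at large scale and being contracted backward by $e^{-\lambda}$ per good iterate or are inside $\bigtriangleup_1$ for a binding block; summing $\sum_j |f^j w|$ therefore gives a convergent series whose tail past the last $s(x,y)$ returns is $O(\beta_0^{s(x,y)})$ for some $\beta_0\in(0,1)$.

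Concretely, I would prove the pointwise bound $\big|\log\frac{D\hat f(x)}{D\hat f(y)}\big|\le C'\sum_{j} |f^j[x,y]|$ where the sum is over $0\le j<R(w)$, using $\log(1+u)\le u$ and the mean value theorem as in \eqref{generalized distoration}, bounding $\sup|D^2f|/\inf|Df|$ on $f^j w$ by $K_l^2/d(f^j w)$ and, crucially, observing that whenever $f^j w$ is within distance comparable to its length of $\mathcal C$ we are inside a binding block where \eqref{in the neighborhood} applies, so $|f^j[x,y]|/d(f^j w)\le 2\gamma_j(c)$ and these terms are summable \emph{within each block uniformly in the block}. The remaining terms have $d(f^j w)\ge \delta/2$ bounded below, so $|f^j[x,y]|/d(f^j w)\le (2/\delta)|f^j[x,y]|$ and one just needs $\sum |f^j[x,y]|$ convergent with the right tail. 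Then I would invoke the backward contraction: on the cylinder $w$ of $\hat f^{s(x,y)}$, the expansion of $\hat f^k$ combined with Lemma~\ref{important} (the $\rho^{\#S_{s,s}}$ and $C^{-\#S_d}$ factors, and the $DF_{p_i}\ge K_1\gamma_{p_i}^{-1}$ bound of Lemma~\ref{expanding}) forces $|f^j[x,y]|$, summed over iterates between the $(k)$th and $(k{+}1)$st full returns, to be bounded by $\mathrm{const}\cdot\beta_1^{k}\cdot|\hat f^k w|\le \mathrm{const}\cdot\beta_1^{k}$, uniformly; summing over $k\ge s(x,y)$ (the blocks still unresolved) yields $C\beta^{s(x,y)}$ with $\beta=\max(\beta_1,e^{-\lambda/\text{const}})$. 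Finally exponentiate: $\big|\frac{D\hat f(x)}{D\hat f(y)}-1\big|\le e^{C\beta^{s(x,y)}}-1\le C\beta^{s(x,y)}$ after enlarging $C$, which is the claim.

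I expect the main obstacle to be the bookkeeping that makes the per-block distortion genuinely \emph{uniform in the block index} while simultaneously extracting the geometric factor $\beta^{s(x,y)}$ in the separation time rather than in the total iterate count $R(w)$: one must carefully attribute each iterate $j$ to exactly one block, handle the binding periods (whose lengths $p_i$ can be large) without losing the geometric decay — this is where the choice of $\delta$ via Lemma~\ref{small}, condition (2) after Lemma~\ref{small}, and the bound $d_n(c)\le\gamma_{n-1}(c)$ enter — and make sure the finitely-many-extra-iterates tail from Lemma~\ref{full Markov} is absorbed into the constant $C$ and does not interfere with the separation-time count. Since this is precisely the estimate carried out in \cite{BLS} for the equal-order case and none of the steps above is sensitive to the two-sided critical orders (the order only enters through the already-established Lemma~\ref{expanding} and Lemma~\ref{important}), I would, in the interest of brevity, present the decomposition and the two key inequalities explicitly and then refer to \cite{BLS} for the remaining routine summation, as the paper does elsewhere.
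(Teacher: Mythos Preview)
Your proposal is correct and takes essentially the same approach as the paper: the paper's entire proof of this lemma is the single line ``See \cite{BLS}'', so your sketch is in fact considerably more detailed than what appears in the paper, while still ultimately deferring to \cite{BLS} for the routine summation, exactly as you anticipated.
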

\begin{proof}
See \cite{BLS}.
\end{proof}

\section{Proof of Theorem 2 and Theorem 3}
For $f\in \mathcal{A}_1$, we focus on a minimal cycle of $f$. Let
it be $X=\cup_{i=0}^{m-1}f^i(J)$, where $m$ is the period of the
minimal cycle $X$. In previous Section, we have constructed an
induced full Markov map $f^{R(w)}$ on a neighborhood $\Omega$ of a
critical point $c\in J$. Then denote $g=:\Lambda^{-1}\circ f^m
\circ\Lambda$ be a renormalization of $f$, where $\Lambda$ is an
affine transformation from interval $M$ to $J$. We will apply
results of Young \cite{yo99} to study the statistical properties
of $g$ stated in Theorem 2.

From the definition of $g$, $g$ induces a Markov map
$\hat{G}:=\Lambda^{-1}\circ \hat{f} \circ\Lambda$ on the interval
$\bigtriangleup_0=\Lambda^{-1}(\Omega)$. Assume that the induced
time of $\hat{G}$ about $g$ is $T(w)$, clearly $T(w)=R(w)/m$.

To treat the statistical properties of $g$, we define a tower
about $g$,
$$
\hat{\bigtriangleup}=\{(x,k)\in \bigtriangleup_0\times Z; 0\leq
k\leq T(x)\}
$$
consists of  levels $\bigtriangleup_n = \{(x,k)\in
\hat{\bigtriangleup}; k=n\}$. We then define the map
$\hat{g}:\hat{\bigtriangleup} \rightarrow \hat{\bigtriangleup}$ by

\begin{equation*}
\hat{g}(x,k):=
\begin{cases} (x,k+1) \ \ & \ if \  k+1 \leq T(x), \ x\in w,
\\
(\hat{G}(x),0) \ \ & \  if \ k+1=T(x), \ x\in w.
   \end{cases}
   \end{equation*}

Let $\sigma:\hat{\bigtriangleup} \rightarrow M$ by $\sigma(x,
k)=g^k(x)$, which is a semi-conjugacy between $\hat{g}$ and $g$.
So we can get the statistical properties of $g$ from the
properties of $\hat{g}$.

To study the structure of $\hat{g}$, we define $\pi$ be a
projection between $\hat{\bigtriangleup}$ and $\bigtriangleup_0$
by $\pi(x,l)=x$, clearly $\pi \hat{g}=\hat{G}\pi$. The powerful
result of \cite{yo99} relates the tower map $\hat{g}$ and the full
induced Markov map $\hat{G}$.

We summarize Young's result from \cite{yo99} as we need. Recall
some notations from above, for a fixed $\beta\in (0,1)$, let
$$
\mathcal{C}_{\beta}=\{\hat{\varphi}: \hat{\bigtriangleup}
\rightarrow R; \  \exists C>0, \forall x,y \in
\hat{\bigtriangleup},\ |\hat{\varphi}(x)- \hat{\varphi}(y)|\leq
C{\beta}^{s(x,y)}\},
$$
and
$$
{\mathcal{C}_{\beta}}^+=\{\hat{\varphi} \in \mathcal{C}_{\beta};
\hat{\varphi}> 0 \}.
$$

\begin{theorem}\cite{yo99}
Suppose that $\hat{G}:\bigtriangleup_0 \rightarrow
\bigtriangleup_0$  and $\hat{g}:\bigtriangleup \to \bigtriangleup
$ defined as above.  Let $\rho_n$ be a sequence of positive
numbers related to the tail behavior as follows. If $|\{T>n\}|\leq
n^{-\alpha}$, then $\rho_n=n^{1-\alpha}$. If $|\{T>n\}|\leq
e^{-\beta n}$, then $\rho_n=e^{-\beta'n}$ for some $\beta'<\beta$.
If $|\{T>n\}|\leq e^{-n^{\alpha}}$ for some $\alpha\in (0,1)$,
then $\rho_n=e^{-n^{\alpha'}}$ for some $\alpha'<\alpha$. If
$$
|\frac{D\hat{G}(x)}{D\hat{G}(y)}-1|\leq C \beta^{s(x,y)}
$$
for any $x,y\in \bigtriangleup_0$, some $\beta\in (0,1)$ and
$C>0$,  and the induce time function $T$ satisfies
$\sum_n|T>n|<\infty$,

then we have

(1)$\hat{\bigtriangleup}$ carries a $\hat{g}$ acip $\hat{\nu}$ and
$\frac{d\hat{\nu}}{dm_{\hat{\bigtriangleup}}}\in
{\mathcal{C}_{\beta}}^+$, where $m_{\hat{\bigtriangleup}}$ be
Lebsegue measure on the tower. $(\hat{g},\hat{\nu})$ is exact,
hence ergodic and mixing.

(2)For any pair of functions $\hat{\varphi} \in
L^{\infty}(\hat{\bigtriangleup}, m_{\hat{\bigtriangleup}})$ and
$\hat{\psi} \in \mathcal{C}_{\beta}$, there exists
$C_{\hat{\varphi},\hat{\psi}}$ such that
$$
|\int (\hat{\varphi}\circ \hat{g})\hat{\psi} d\hat{\nu} -
\int\hat{\varphi} d\hat{\nu} \int\hat{\psi} d\hat{\nu}|\leq
C_{\hat{\varphi},\hat{\psi}}\rho_n.
$$

(3)If $|\{T>n\}|\leq O(n^{-\alpha})$ for some $\alpha>2$, then for
any $\hat{\varphi}\in \mathcal{C}_{\beta}$ which is not a
coboundary $(\hat{\varphi}\neq \hat{\psi}\circ
\hat{g}-\hat{\psi})$, the Central Limit Theorem holds, i.e., there
exists $\sigma> 0$ such that
$\frac{1}{\sqrt{n}}\sum_{i=0}^{n-1}\hat{\varphi}\circ {\hat{g}}^i$
converges to the normal distribution
$\mathcal{N}(\int\hat{\varphi} d\hat{\nu}, \sigma)$.

\end{theorem}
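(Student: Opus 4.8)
The plan is to read the final statement as a packaging of the tower theorems of \cite{yo99} and to organise the argument in four stages. First I would construct the invariant measure. Since $\hat G$ is a full-branch Markov map with uniformly bounded distortion (Lemma \ref{bound distortion 2}), I would show that the Perron--Frobenius operator associated with Lebesgue measure on $\bigtriangleup_0$ preserves the cone ${\mathcal C_\beta}^+$ and admits a fixed density $\phi_0\in{\mathcal C_\beta}^+$ bounded away from $0$ and $\infty$; spreading $\phi_0\,dm$ over the levels of $\hat\bigtriangleup$ and using $\sum_n|\{T>n\}|<\infty$ to guarantee $m_{\hat\bigtriangleup}(\hat\bigtriangleup)<\infty$ would yield, after normalisation, an $\hat g$-invariant probability $\hat\nu$ with $d\hat\nu/dm_{\hat\bigtriangleup}\in{\mathcal C_\beta}^+$. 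Exactness of $(\hat g,\hat\nu)$ I would deduce from the full-branch property: any positive-measure set has an $\hat G$-iterate covering $\bigtriangleup_0$ modulo a null set, so the tail $\sigma$-algebra is trivial, whence mixing and ergodicity.

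Second, for the rates of decay of correlations I would run a coupling argument on two copies of the tower, one started from $\hat\nu$ and one from the renormalised (signed) measure with density $\hat\psi$ against $\hat\nu$. At each simultaneous return to the base $\bigtriangleup_0$, the full-branch structure together with bounded distortion permits a uniformly positive proportion of the not-yet-coupled mass to be matched; the mass still uncoupled at time $n$ is then bounded by a renewal-type sum in the tails $|\{T>k\}|$, and evaluating that sum converts a polynomial tail $n^{-\alpha}$ into the rate $n^{1-\alpha}$, an exponential tail into an exponential rate with a slightly smaller exponent, and a stretched-exponential tail into a stretched-exponential rate with a smaller exponent. This gives $|\int(\hat\varphi\circ\hat g)\hat\psi\,d\hat\nu-\int\hat\varphi\,d\hat\nu\int\hat\psi\,d\hat\nu|\le C_{\hat\varphi,\hat\psi}\,\rho_n$ for $\hat\varphi\in L^\infty$ and $\hat\psi\in\mathcal C_\beta$.

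Third, for the Central Limit Theorem I would use Gordin's martingale approximation. When $|\{T>n\}|=O(n^{-\alpha})$ with $\alpha>2$, part (2) supplies summable correlations, so $\psi:=\sum_{k\ge 0}\bigl(\hat\varphi\circ\hat g^{\,k}-\int\hat\varphi\,d\hat\nu\bigr)$, realised through the transfer operator, converges in $L^2(\hat\nu)$, and $\hat\varphi-\int\hat\varphi\,d\hat\nu=\chi+\psi\circ\hat g-\psi$ with $\chi$ a reverse-martingale difference in $L^2(\hat\nu)$. The martingale Central Limit Theorem then gives convergence of $\frac1{\sqrt n}\sum_{i=0}^{n-1}\hat\varphi\circ\hat g^{\,i}$ to $\mathcal N(\int\hat\varphi\,d\hat\nu,\sigma)$, and the standard dichotomy shows $\sigma>0$ unless $\hat\varphi$ is a coboundary.

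The main obstacle, and the technical heart of \cite{yo99}, is the renewal estimate inside the coupling step: one must control, uniformly in the two reference measures and with the sharp rate, the fraction of mass that has failed to couple after $n$ iterates in terms of the return-time tails, which requires careful bookkeeping of the stopping times at which coupling is attempted and a convolution estimate for sequences with the prescribed decay. The remaining ingredients — the distortion and full-branch inputs, the construction of $\hat\nu$, and the martingale argument — are comparatively routine given the lemmas already in place, so I would present those briefly and refer to \cite{yo99} for the coupling estimate.
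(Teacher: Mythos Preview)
Your outline is a faithful sketch of Young's own argument in \cite{yo99}, but there is nothing to compare it against in this paper: the theorem is stated with the citation \cite{yo99} attached and is used as a black box, with no proof or proof sketch supplied. The paper's only contribution at this point is to verify that the hypotheses hold (via Lemma~\ref{rates} and Lemma~\ref{bound distortion 2}) and then to transfer the conclusions from the tower $(\hat g,\hat\nu)$ down to $(g,\mu)$ through the semiconjugacy $\sigma$. So your proposal is not wrong, but it goes well beyond what the paper does; if your aim is to match the paper, you should simply cite \cite{yo99} and move on.
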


At first, Lemma \ref{rates} and  Lemma \ref{bound distortion 2}
imply that conditions in Young's Theorem hold.  If we define a
measure on $M$ by $\mu=\sigma_{*}\hat{\nu}$, i.e,
$\mu(B)=\hat{\nu}(\sigma^{-1}(B))$ for all measurable set $B$, we
have
\begin{equation*}
\begin{split}
\mu (g^{-1}(B))&=\hat{\nu}(\sigma^{-1}(g^{-1}(B)))=
\hat{\nu}(\hat{g}^{-1}(\sigma^{-1}(B)))\\
&=\hat{\nu}(\sigma^{-1}(B))=\mu(B).
\end{split}
\end{equation*}
So $\mu$ is an invariant measure of $g$. Note that the reference
measure $m_{\hat{\bigtriangleup}}$ defined by
$m_{\hat{\bigtriangleup}}(A)=\sum_{k\geq 0}m(\pi(A\cap
\bigtriangleup_k))$ for all measurable set $A\in
\hat{\bigtriangleup}$. Let $I_k=\sigma(\bigtriangleup_k)$, for any
Borel set $B\in M$, there exists a constant $C>0$ such that
\begin{equation*}
\begin{split}
\mu(B)=\hat{\nu}({\sigma^{-1}(B)}) &\leq C
m_{\hat{\bigtriangleup}}(\sigma^{-1}(B))=C\sum_{k\geq
0}m(\pi(\sigma^{-1}(B)\cap \bigtriangleup_k)) \\
&\leq C \sum_{k\geq 0} m(B\cap I_k) =Cm(B),
\end{split}
\end{equation*}
where the last inequality follows from the bounded distortion of
$\sigma$(since $g^i$ has bounded distortion for each $w\in
\bigtriangleup$ and $i<T(w)$). So $\mu$ is an acip of $g$. On the
other hand, because $\sigma$ has bounded distortion,
$$
\frac{d\mu}{dm}=\frac{d\sigma_*\nu}{dm}>0
$$
from above theorem. Then the support is equal to the entire
interval $M$, which in turn implies that $\mu$ is the unique acip
for $g$ on $M$. According Theorem 1, if $l_{\max}>1$ for all $c$
in the cycle $X$, we obtains that $\frac{d\mu}{dm}\in L^{\tau}$
for all $0<\tau<\frac{l_{\max}}{l_{\max}-1}$.

We also obtain that $(g,\mu)$ is exact. Indeed, if there are sets
$B_0,B_1,\dots,B_j\subset M$ such that $\mu(B_j)\in (0,1)$ and
$B_0=g^{-j}(B_j)$ for all integer $j\geq 0$, then let
$A_j={\sigma}^{-1}(B_j)$, $\hat{\nu}(A_j)\in(0,1)$, therefore
$$
\hat{g}^{-j}(A_j)={\hat{g}}^{-j}({\sigma}^{-1}(B_j))={\sigma}^{-1}
{g}^{-j}(B_j)= {\sigma}^{-1}(B_0)=A_0,
$$
which contradicts to the first statement of Young's Theorem.

We shall consider the decay of correlations of $(g,\mu)$. Let
$\varphi, \psi:M\rightarrow R$ be two H\"{o}lder continuous
function so that there exist $C>0$ and $\alpha\in (0,1)$
$$
|\varphi(x_1)-\varphi(x_2)|\leq  C|x_1-x_2|^{\alpha},
$$
for all $x_1,x_2\in M$. Let $\hat{\varphi}=\varphi\circ \sigma$,
then if $x,y \in \bigtriangleup_0$ with $s(x,y)\geq 1$, we have
for $0<k\leq T(x)=T(y)$
\begin{equation*}
\begin{split}
|\hat{\varphi}(x,k)-\hat{\varphi}(y,k)|&=|\varphi \circ
g^k(x)-\varphi \circ g^k(y)| \leq C{|g^k(x)-g^k(y)|}^{\alpha}
\\
&\leq C{|\hat{f}(x)-\hat{f}(y)|}^{\alpha} \leq C
{({\beta}^\alpha)}^{s(x,y)},
\end{split}
\end{equation*}
thus $\alpha$-H\"{o}lder continuous observables  on $M$ correspond
to observables in $\mathcal{C}_{\beta_1}$ for
$\beta_1={\beta}^{\alpha}$, where $\beta$ is the constant in Lemma
\ref{bound distortion 2}.

Since $\mu=\sigma_*\hat{\nu}$, $\hat{\varphi}=\varphi\circ \sigma$
and  $\hat{\psi}=\psi\circ \sigma$, we have the following
relations $\int \hat{\varphi} d\hat{\nu}=\int \varphi d\mu$, $\int
\hat{\psi} d\hat{\nu}=\int \psi d\mu$ and
\begin{equation*}
|\int (\varphi \circ g^n)\psi d\mu - \int\varphi d\mu \int\psi
d\mu|= |\int (\hat{\varphi} \circ \hat{g}^n) \hat{\psi} d\hat{\nu}
- \int \hat{\varphi} d\hat{\nu} \int \hat{\psi} d\hat{\nu}|.
\end{equation*}
Thus the decay of correlations of $g$ follows from the second
statement of the Young's Theorem.

Similarly, because
\begin{equation*}
\mu \Big{\{} x\in M; \big{(}\frac{1}{\sqrt{n}} \sum_{j=0}^{n-1}
\varphi (g^j(x))- \int \varphi d\mu \big{)} \in I \Big{\}}  =
\hat{\nu} \Big{\{} y\in \hat{\bigtriangleup};
\big{(}\frac{1}{\sqrt{n}} \sum_{j=0}^{n-1} \hat{\varphi}
({\hat{g}}^j(y))- \int \hat{\varphi} d\hat{\nu} \big{)} \in I
\Big{\}},
\end{equation*}
applying the third statement of Young's Theorem, we obtain the
Central Limit Theorem for the H\"{o}lder continuous observable
$\varphi$. Therefore, we have finished the proof of Theorem 2.

For $f\in \mathcal{A}_2$, we concentrated on the dynamics of $(f,
J^*)$. In previous sections, we have constructed an induced full
Markov map $f^{R(w)}$ on a neighborhood $\Omega$ of a critical
point $c\in J^*$. Let $k$ be the greatest common devisor of all
values taken by the function $R(w)$, then we can obtain Theorem 3
by using Young's results and the above argument similarly.

\end{document}